\documentclass[12pt,reqno]{amsart}

\newcommand\version{October 9, 2015}


\usepackage{amsmath,amsfonts,amsthm,amssymb,amsxtra}



\setlength{\voffset}{-.7truein}
\setlength{\textheight}{8.8truein}
\setlength{\textwidth}{6.05truein}
\setlength{\hoffset}{-.7truein}


\newtheorem{theorem}{Theorem}[section]
\newtheorem{proposition}[theorem]{Proposition}
\newtheorem{lemma}[theorem]{Lemma}

\theoremstyle{definition}

\theoremstyle{remark}


\numberwithin{equation}{section}


\newcommand{\C}{\mathbb{C}}

\renewcommand{\epsilon}{\varepsilon}
\newcommand{\ess}{\mathrm{ess}}

\newcommand{\N}{\mathbb{N}}

\renewcommand{\phi}{\varphi}
\newcommand{\R}{\mathbb{R}}

\DeclareMathOperator{\codim}{codim}
\DeclareMathOperator{\dist}{dist}

\DeclareMathOperator{\dom}{dom}
\DeclareMathOperator{\im}{Im}
\DeclareMathOperator{\ind}{ind}
\DeclareMathOperator{\ran}{ran}
\DeclareMathOperator{\re}{Re}

\DeclareMathOperator{\tr}{Tr}


\begin{document}

\title[Schr\"odinger operators with complex potentials --- \version]{Eigenvalue bounds for Schr\"odinger operators\\ with complex potentials. III}

\author{Rupert L. Frank}
\address{Rupert L. Frank, Mathematics 253-37, Caltech, Pasadena, CA 91125, USA}
\email{rlfrank@caltech.edu}

\begin{abstract}
We discuss the eigenvalues $E_j$ of Schr\"odinger operators $-\Delta+V$ in $L^2(\R^d)$ with complex potentials $V\in L^p$, $p<\infty$. We show that (A) $\re E_j\to\infty$ implies $\im E_j\to 0$, and (B) $\re E_j\to E\in [0,\infty)$ implies $(\im E_j)\in\ell^q$ for some $q$ depending on $p$. We prove quantitative versions of (A) and (B) in terms of the $L^p$-norm of $V$.
\end{abstract}


\maketitle

\renewcommand{\thefootnote}{${}$} \footnotetext{\copyright\, 2015 by the author. This paper may be reproduced, in its entirety, for non-commercial purposes.}

\section{Introduction and main results}

In this paper we continue our study \cite{FrLaLiSe,Fr,FrSa,FrSi} of eigenvalues of Schr\"odinger operators $-\Delta+V$ in $L^2(\R^d)$ with complex-valued potentials. We are interested in quantitative information about the location of the eigenvalues under the sole assumption that $V\in L^p(\R^d)$ for some $p<\infty$. We also need $p\geq 1$ if $d=1$, $p>1$ if $d=2$ and $p\geq d/2$ if $d\geq 3$ in order to define $-\Delta+V$ as an $m$-sectorial operator. As usual in connection with Lieb--Thirring inequalities we will write $p=\gamma+d/2$ in the following. While we treated the `short range' case $\gamma\leq 1/2$ in \cite{Fr,FrSa,FrSi}, here we will be mostly concerned with the `long range' case $\gamma>1/2$.

It is an easy consequence of relative form compactness that under the above assumptions on $\gamma$, the spectrum of $-\Delta+V$ in $L^2(\R^d)$ in $\C\setminus[0,\infty)$ consists of isolated eigenvalues of finite algebraic multiplicities; see Proposition \ref{spec} in the appendix.

Our first topic is the location of individual eigenvalues. In \cite{Fr} (see also \cite{FrSi}) we have shown that, if $\gamma\leq 1/2$, all eigenvalues lie in a disk whose radius is controlled by the $L^{\gamma+d/2}$-norm of $V$. More precisely, any eigenvalue $E\in\C$ of $-\Delta+V$ satisfies
\begin{equation}
\label{eq:davies}
|E|^\gamma \leq D_{\gamma,d} \int_{\R^d} |V|^{\gamma+d/2}\,dx
\end{equation}
for $\gamma=1/2$ if $d=1$ and $0<\gamma\leq 1/2$ if $d\geq 2$ and with a constant $D_{\gamma,d}$ independent of $V$. This bound for $d=1$ is due to Abramov, Aslanyan and Davies \cite{AAD}; see also \cite{DN,FrSi}. Our first main result in this paper is a replacement of this bound for $\gamma>1/2$. We will use the notation
$$
\delta(z) := \dist(z,[0,\infty)) \,.
$$

\begin{theorem}
\label{main1}
Let $d\geq 1$ and $\gamma\geq 1/2$. Then any eigenvalue $E$ of $-\Delta+V$ in $L^2(\R^d)$ satisfies
\begin{equation}
\label{eq:main1}
\delta(E)^{\gamma-1/2} |E|^{1/2} \leq D_{\gamma,d} \int_{\R^d} |V|^{\gamma+d/2}\,dx \,.
\end{equation}
\end{theorem}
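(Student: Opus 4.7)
My plan is to apply the Birman-Schwinger principle together with an interpolated resolvent estimate that captures the blow-up of $\|(-\Delta - E)^{-1}\|$ as $E$ approaches $[0,\infty)$. For brevity, set $p := \gamma + d/2$, $q := 2p/(p-1)$, and $q' := 2p/(p+1)$, so that $\tfrac{1}{q'} - \tfrac{1}{q} = \tfrac{1}{p}$.

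First I would invoke the Birman-Schwinger principle (valid here by the $m$-sectoriality and relative form compactness asserted in Proposition \ref{spec}): if $E \in \C\setminus[0,\infty)$ is an eigenvalue of $-\Delta + V$, then an associated eigenfunction $\psi$ yields $\phi := |V|^{1/2}\psi$ satisfying $\phi = -K(E)\phi$, where
$$
K(E) := |V|^{1/2}\,(-\Delta - E)^{-1}\,V^{1/2}\quad\text{on } L^2(\R^d),\qquad V^{1/2} := |V|^{1/2}\sgn V.
$$
In particular $\|K(E)\|_{L^2\to L^2}\geq 1$. A Hölder factorization bounds $|V|^{1/2}:L^q\to L^2$ and $V^{1/2}:L^2\to L^{q'}$ each by $\|V\|_{L^p}^{1/2}$, so that
$$
1 \;\leq\; \|V\|_{L^p}\cdot\bigl\|(-\Delta - E)^{-1}\bigr\|_{L^{q'}\to L^{q}}.
$$

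The crucial step is then to establish the resolvent estimate
$$
\bigl\|(-\Delta - E)^{-1}\bigr\|_{L^{q'}\to L^{q}} \;\leq\; C_{\gamma,d}\,\delta(E)^{-(\gamma-1/2)/p}\,|E|^{-1/(2p)},
$$
which I would obtain by Riesz-Thorin interpolation between two endpoint bounds. At one endpoint, the Kenig-Ruiz-Sogge uniform Sobolev inequality at the line $\tfrac{1}{q_0'} - \tfrac{1}{q_0} = \tfrac{2}{d+1}$ gives $\|(-\Delta - E)^{-1}\|_{L^{q_0'}\to L^{q_0}}\leq C|E|^{-1/(d+1)}$ uniformly in $E\in\C\setminus[0,\infty)$; for $d=1$ this is an elementary pointwise bound on the explicit Helmholtz kernel, while for $d\geq 2$ it is the deep harmonic-analytic input underlying the whole argument. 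At the other endpoint, the spectral theorem trivially yields $\|(-\Delta - E)^{-1}\|_{L^2\to L^2} = \delta(E)^{-1}$. Interpolating with parameter $\theta := (2\gamma-1)/(2\gamma+d)$ — which lies in $[0,1]$ \emph{precisely} because $\gamma\geq 1/2$ — lands exactly on the pair $(q',q)$, since $\tfrac{1}{q'} - \tfrac{1}{q} = (1-\theta)\cdot\tfrac{2}{d+1} = \tfrac{1}{p}$, and the two interpolation factors combine to give the displayed bound upon verifying $(1-\theta)/(d+1) = 1/(2p)$ and $\theta = (\gamma-1/2)/p$.

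Substituting into the Birman-Schwinger inequality and raising to the $p$-th power produces \eqref{eq:main1}. The main obstacle is the KRS uniform Sobolev estimate in dimensions $d\geq 2$; once that is in hand, everything else is routine bookkeeping with Hölder exponents and a straightforward application of Riesz-Thorin. A secondary point worth verifying is the validity of the Birman-Schwinger principle for the non-self-adjoint operator $-\Delta + V$, but only the weak conclusion $\|K(E)\|_{L^2\to L^2}\geq 1$ at eigenvalues is needed, and this follows immediately from the construction of $\phi$ above once one knows (from Proposition \ref{spec}) that $E$ is in fact an isolated eigenvalue with an honest eigenfunction.
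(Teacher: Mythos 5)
Your argument is correct, and it is precisely the alternative proof that the paper explicitly flags in the remark immediately following its proof of Theorem~\ref{main1}: there the author observes that for Theorem~\ref{main1} one does not need the full Schatten-norm strength of Proposition~\ref{res}, only an operator-norm version obtainable from the Kenig--Ruiz--Sogge uniform resolvent bounds by complex interpolation. The paper's own proof proceeds via Proposition~\ref{res}, which is a Schatten-ideal bound on $\sqrt{V}(-\Delta-z)^{-1}\sqrt{|V|}$ in $\mathfrak S^{2(\gamma+d/2)}$, established by Hadamard three-lines interpolation between the Frank--Sabin $\mathfrak S^{d+1}$ endpoint and the trivial $\delta(z)^{-1}$ operator-norm bound; Theorem~\ref{main1} follows because the operator norm is dominated by the Schatten norm. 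You avoid Schatten ideals entirely: you H\"older-factorize the Birman--Schwinger operator to reduce to an $L^{q'}\to L^q$ resolvent estimate, and then Riesz--Thorin interpolate between the KRS/Frank endpoint $\|(-\Delta-E)^{-1}\|_{L^{q_0'}\to L^{q_0}}\lesssim |E|^{-1/(d+1)}$ with $1/q_0'-1/q_0=2/(d+1)$ and the spectral bound $\|(-\Delta-E)^{-1}\|_{L^2\to L^2}=\delta(E)^{-1}$, with parameter $\theta=(2\gamma-1)/(2\gamma+d)$. Your exponent bookkeeping is correct, and this route is slightly more elementary for Theorem~\ref{main1} alone; the paper states Proposition~\ref{res} in the stronger Schatten form because that strength is genuinely needed later for Theorem~\ref{main2}. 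One small point of attribution: the uniform resolvent estimate at the Tomas--Stein exponent is due to Kenig--Ruiz--Sogge only for $d\geq 3$; the case $d=2$ is treated in~\cite{Fr}, and $d=1$ is elementary, exactly as you note. The paper's remark makes the same gentle elision by citing~\cite{KRS} with ``(see also~\cite{Fr})''.
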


\emph{Remarks.} (1) Using $|E|\geq |\re E|$ and the fact that $\delta(E)=|\im E|$ if $\re E\geq 0$, we obtain
\begin{equation}
\label{eq:main1cor}
|\im E| \leq D_{\gamma,d}^\frac{1}{\gamma-1/2} (\re E)^{-\frac{1/2}{\gamma-1/2}} \left( \int_{\R^d} |V|^{\gamma+d/2}\,dx \right)^{\frac{1}{\gamma-1/2}} 
\end{equation}
for eigenvalues $E$ with $\re E\geq 0$. This bound has the following \emph{important consequence} (which we have not found in the literature): If $(E_j)$ is a sequence of eigenvalues of $-\Delta+V$ in $L^2(\R^d)$ with $\re E_j\to\infty$, then $\im E_j\to 0$ (provided $V\in L^{\gamma+d/2}(\R^d)$ with $\gamma> 1/2$).\\
(2) Theorem \ref{main1} improves upon a recent result by Enblom \cite{En} who showed that $\delta(E)^\gamma \leq D'_{\gamma,d} \int |V|^{\gamma+d/2}\,dx$ (which, in turn, improves upon earlier results in \cite{FrLaLiSe,LaSa}). Note that Enblom's result does \emph{not} imply the consequence stated in (1).\\
(3) Theorem \ref{main1} is, in fact, a simple consequence of the method of proof of the bound \eqref{eq:davies} with $\gamma=1/2$ and the trival bound $\delta(E)\leq \|V\|_\infty$ corresponding to $\gamma=\infty$. When $d=1$, this argument yields \eqref{eq:main1} with the explicit constant
$$
D_{\gamma,1} = 2^{-1} \,;
$$
see the remark following the proof of Theorem \ref{res}.\\
(4) For $d=1$ the bounds \eqref{eq:main1} and \eqref{eq:main1cor} are closely related to the bound
\begin{equation}
\label{eq:dn}
|E|^{\frac{\gamma+1/2}{2}} (\re\sqrt{-E})^{\gamma-1/2} \leq \frac12 \left( \frac{\gamma-1/2}{\gamma+1/2} \right)^{\gamma-1/2} \int_{\R} |V|^{\gamma+1/2}\,dx
\end{equation}
from \cite[Cor. 2.17]{DN} (with the branch of the square root on $\C\setminus(-\infty,0]$ having positive real part). This bound coincides with \eqref{eq:main1cor} for $\re E \gg|\im E|$, up to the value of the constant, because then $\re\sqrt{-E}\sim |\im E|/(2\sqrt{\re E})$. However, \eqref{eq:dn} is proved using real interpolation while we obtain \eqref{eq:main1} using complex interpolation.

\medskip

Our second topic is the distribution of eigenvalues as quantified by analogues of the Lieb--Thirring bounds \cite{LT}. Quantitative information for $\gamma\leq 1/2$ was obtained in \cite{FrSa}, where it was shown that the eigenvalues $E_j\in\C\setminus[0,\infty)$ of $-\Delta +V$, repeated according to their algebraic multiplicities, satisfy
\begin{equation}\label{eq:LTfrsa1}
\left( \sum_j \frac{\delta(E_j)}{|E_j|^{1/2}} \right)^{2\gamma} \leq L_{\gamma,d,\epsilon}\, \int_{\R^d} |V|^{\gamma+d/2}\,dx
\qquad\text{if}\ \gamma<\frac{d}{2(2d-1)} \ \text{and}\ d\geq 2 \,,
\end{equation}
and
\begin{align}\label{eq:LTfrsa2}
\left( \sum_j \frac{\delta(E_j)}{|E_j|^{\frac{d(1/2-\gamma)}{d/2-\gamma} - \frac{\epsilon}{2}}} \right)^\frac{2\gamma(d-2\gamma)}{4(d-1)\gamma+\epsilon(d-2\gamma)} \leq \ & L_{\gamma,d,\epsilon}\, \int_{\R^d} |V|^{\gamma+d/2}\,dx \notag \\
& \text{if}\ \frac{d}{2(2d-1)}\leq\gamma\leq\frac12 \ \text{and}\ d\geq 1 \,.
\end{align}
(In \eqref{eq:LTfrsa2} for $d=1$ and $\gamma=1/2$, we interpret the exponent $d(1/2-\gamma)/(d/2-\gamma)$ as $0$ and the exponent $2\gamma(d-2\gamma)/(4(d-1)\gamma+\epsilon(d-2\gamma))$ as $2\gamma/(2+\epsilon)$.) Our second main result is a replacement of these bounds for $\gamma>1/2$.

\begin{theorem}\label{main2}
Let $d\geq 1$ and $\gamma>1/2$. Then the eigenvalues $E_j\in\C\setminus[0,\infty)$ of $-\Delta+V$ in $L^2(\R^d)$, repeated according to their algebraic multiplicity, satisfy for any $\epsilon>0$,
\begin{equation}\label{eq:main2}
\left( \sum_{|E_j|^\gamma\leq C_{\gamma,d} \int |V|^{\gamma+d/2}\,dx} \delta(E_j)^{2\gamma+\epsilon} \right)^\frac{\gamma}{2\gamma+\epsilon} \leq L_{\gamma,d,\epsilon} \int_{\R^d} |V|^{\gamma+d/2}\,dx
\end{equation}
and for any $\epsilon>0$, $0<\epsilon'<\gamma/(\gamma+d/2)$ and $\mu\geq 1$,
\begin{equation}\label{eq:main21}
\left( \sum_{|E_j|^\gamma\geq \mu C_{\gamma,d} \int |V|^{\gamma+d/2}\,dx} \frac{\delta(E_j)^{2\gamma+\epsilon}}{|E_j|^{2\gamma+\epsilon - \frac{\gamma}{\gamma+d/2} + \epsilon'}} \right)^{\frac{\gamma(\gamma+d/2)}{\gamma- \epsilon'(\gamma+d/2)}} \leq L_{\gamma,d,\epsilon,\epsilon'} \mu^{-\frac{\epsilon'(\gamma+d/2)}{\gamma-\epsilon'(\gamma+d/2)}} \int_{\R^d} |V|^{\gamma+d/2}\,dx \,.
\end{equation}
\end{theorem}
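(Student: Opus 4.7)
The plan is to combine the individual eigenvalue bound of Theorem \ref{main1} with the perturbation-determinant / complex-analysis scheme used in \cite{FrSa} for the short-range case $\gamma\le 1/2$. Factoring $V=V_1V_2$ with $V_1=|V|^{1/2}\sgn V$ and $V_2=|V|^{1/2}$, and taking $n=\lceil 2(\gamma+d/2)\rceil$, I would introduce the regularized Fredholm determinant
$$
D(z):={\det}_n\bigl(1+V_2(-\Delta-z)^{-1}V_1\bigr),\qquad z\in\C\setminus[0,\infty),
$$
which is analytic on $\C\setminus[0,\infty)$ and whose zeros, counted with algebraic multiplicity, are exactly the eigenvalues $E_j$. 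A Kato--Seiler--Simon estimate together with the standard bound $\log|\det_n(1+T)|\le c_n\|T\|_{\mathcal S^n}^n$ then yields a growth estimate
$$
\log|D(z)|\le C\,\|V\|_{L^{\gamma+d/2}}^{\gamma+d/2}\,\delta(z)^{-A}|z|^{A-\gamma}
$$
for a suitable exponent $A$ dictated by the Schatten-class behavior of the free resolvent, reflecting singularities of $D$ both near the cut $[0,\infty)$ and at $\infty$.

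Next I would conformally map $\C\setminus[0,\infty)$ to the unit disk $\mathbb D$ via $\phi(z)=(1-\sqrt{-z})/(1+\sqrt{-z})$, so that the cut is carried to the boundary circle and the two distinguished points $\phi(0)=1$, $\phi(\infty)=-1$ become boundary singularities of $D\circ\phi^{-1}$. A Borichev--Golinskii--Kupin inequality with two singularities at $\pm 1$ then converts the above growth estimate into a weighted summability statement for the zeros $w_j=\phi(E_j)$. Unpacking the geometric relations $1-|w_j|\asymp \delta(E_j)/(|E_j|^{1/2}(1+|E_j|^{1/2}))$ and $|1\mp w_j|\asymp \min(1,|E_j|^{\pm 1/2})$, and choosing the BGK exponents appropriately, produces a raw Lieb--Thirring-type bound with weights of the form $\delta(E_j)^{2\gamma+\epsilon}|E_j|^{-\sigma}$ for an adjustable parameter $\sigma$.

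Finally, Theorem \ref{main1} allows me to split the family $\{E_j\}$ according to the size of $|E_j|$. For eigenvalues with $|E_j|^\gamma\le C_{\gamma,d}\int|V|^{\gamma+d/2}$, the factor $|E_j|^{-\sigma}$ in the weight is harmless and a direct choice of $\sigma$ yields \eqref{eq:main2}. For eigenvalues with $|E_j|^\gamma\ge \mu C_{\gamma,d}\int|V|^{\gamma+d/2}$, I would apply BGK on the corresponding rescaled region and then interpolate the raw summability with the pointwise estimate $\delta(E_j)^{\gamma-1/2}|E_j|^{1/2}\le D_{\gamma,d}\int|V|^{\gamma+d/2}$ of Theorem \ref{main1}, trading a power of $\delta(E_j)$ for a power of $|E_j|^{-1}\|V\|_{L^{\gamma+d/2}}^{\gamma+d/2}$; this should generate exactly the weight $\delta(E_j)^{2\gamma+\epsilon}/|E_j|^{2\gamma+\epsilon-\gamma/(\gamma+d/2)+\epsilon'}$ of \eqref{eq:main21}, together with the prefactor $\mu^{-\epsilon'(\gamma+d/2)/(\gamma-\epsilon'(\gamma+d/2))}$ coming from the threshold.

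The main obstacle will be carrying out the BGK step with two simultaneous boundary singularities while tracking the precise exponents; the small losses $\epsilon,\epsilon'$ are intrinsic features of BGK-type inequalities (one cannot take $\epsilon=0$), and their allocation between the singularities at $E=0$ and $E=\infty$ is what dictates the specific shape of the outer exponent $\gamma(\gamma+d/2)/(\gamma-\epsilon'(\gamma+d/2))$ appearing in \eqref{eq:main21}.
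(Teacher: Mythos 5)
Your outline has the right skeleton---Birman--Schwinger operator, regularized determinant $\det_n(1+K(z))$, conformal map from $\C\setminus[0,\infty)$ to $\mathbb D$, Borichev--Golinskii--Kupin with singularities at $\pm 1$---and this is indeed how the paper proceeds (via the abstract Theorem~\ref{af} and Proposition~\ref{bs}). But there are two genuine gaps.

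The more serious one is the resolvent estimate. The Kato--Seiler--Simon inequality does not deliver what you need. KSS places $K(z)=\sqrt V(-\Delta-z)^{-1}\sqrt{|V|}$ in $\mathfrak S^{\gamma+d/2}$ with $\|K(z)\|_{\gamma+d/2}^{\gamma+d/2}\lesssim \bigl(\int|V|^{\gamma+d/2}\bigr)\,|z|^{d/2-1}\delta(z)^{-(\gamma+d/2-1)}$, so the exponent governing the blow-up of $\log|\det_n(1+K(z))|$ near the cut is $\gamma+d/2-1$. Feeding this into BGK produces a $\delta(E_j)$-exponent of $\gamma+d/2+\epsilon$, not $2\gamma+\epsilon$; for $1/2<\gamma<d/2$ this is strictly worse, so KSS cannot yield \eqref{eq:main2}. (For $\gamma\ge d/2$ you would actually get something better, but that regime is what Theorem~\ref{main3} covers by the Hansmann inequality.) The paper instead proves Proposition~\ref{res}, the trace-ideal bound $\|\sqrt V(-\Delta-z)^{-1}\sqrt{|V|}\|_{2(\gamma+d/2)}\lesssim\delta(z)^{-1+\frac{(d+1)/2}{\gamma+d/2}}|z|^{-\frac{1}{2(\gamma+d/2)}}\|V\|_{\gamma+d/2}$, by complex interpolation between the $\mathfrak S^{d+1}$ uniform resolvent (Kenig--Ruiz--Sogge/Frank--Sabin) bound at $\gamma=1/2$ and the trivial $\mathfrak S^\infty$ bound; raised to the $p$-th power with $p=2(\gamma+d/2)$ this gives exactly $\delta(z)^{-(2\gamma-1)}|z|^{-1}$ and hence the $2\gamma+\epsilon$ exponent. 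This interpolated limiting-absorption estimate, not KSS, is the essential new ingredient, and your choice $n=\lceil 2(\gamma+d/2)\rceil$ is in fact an implicit signal that you should be targeting a $\mathfrak S^{2(\gamma+d/2)}$ bound, which KSS does not provide.

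The second gap is in how \eqref{eq:main21} is obtained. The paper never uses Theorem~\ref{main1} in the proof of Theorem~\ref{main2}. Instead it runs BGK for the one-parameter family of conformal maps $\psi_a(w)=-a\bigl(\tfrac{1+w}{1-w}\bigr)^2$ (your $\phi$ is $\psi_1^{-1}$ up to sign) and then integrates the resulting inequality over the anchor $a\in[\mu^{1/\gamma}A,\infty)$ against a power weight $a^{\rho+\sigma+(p\rho+1+\epsilon)/2-1-\epsilon'}$. This averaging in $a$ (in the spirit of Demuth--Hansmann--Katriel) is what manufactures both the tail weight $|E_j|^{-(2\gamma+\epsilon-\gamma/(\gamma+d/2)+\epsilon')}$ and the $\mu^{-\epsilon'(\gamma+d/2)/(\gamma-\epsilon'(\gamma+d/2))}$ prefactor. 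Your proposed alternative---multiply the raw summability bound by powers of the pointwise inequality $\delta(E_j)^{\gamma-1/2}|E_j|^{1/2}\le D_{\gamma,d}\int|V|^{\gamma+d/2}$---would decrease the exponent on $\delta(E_j)$ rather than increase the negative power of $|E_j|$, so as written it does not reproduce the stated weight; you would at least need to make this trade-off precise, and it is not clear it can match the exponents of \eqref{eq:main21}.
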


Using a different technique we can improve the bound on the accumulation for $\gamma\geq d/2$.

\begin{theorem}\label{main3}
Let $\gamma\geq 1/2$ if $d=1$ and $\gamma>0$ if $d\geq 2$. Then the eigenvalues $E_j\in\C\setminus[0,\infty)$ of $-\Delta+V$ in $L^2(\R^d)$, repeated according to their algebraic multiplicity, satisfy
\begin{equation}\label{eq:main3}
\left( \sum_{|E_j|^\gamma\leq C_{\gamma,d} \int |V|^{\gamma+d/2}\,dx} \delta(E_j)^{\gamma+d/2} \right)^\frac{\gamma}{\gamma+d/2} \leq L_{\gamma,d} \int_{\R^d} |V|^{\gamma+d/2}\,dx
\end{equation}
and for any $0<\epsilon'<\gamma$ and $\mu\geq 1$,
\begin{equation}\label{eq:main31}
\left( \sum_{|E_j|^\gamma\geq \mu C_{\gamma,d} \int |V|^{\gamma+d/2}\,dx} \frac{\delta(E_j)^{\gamma+d/2}}{|E_j|^{d/2 + \epsilon'}} \right)^{\frac{\gamma}{\gamma-\epsilon'}} \leq L_{\gamma,d,\epsilon,\epsilon'} \mu^{-\frac{\epsilon'}{\gamma-\epsilon'}} \int_{\R^d} |V|^{\gamma+d/2}\,dx \,.
\end{equation}
\end{theorem}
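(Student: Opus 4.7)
The plan is to combine a Birman--Schwinger reduction, a Schatten-class bound on the Birman--Schwinger operator via Kato--Seiler--Simon, and a complex-analytic inequality (of Borichev--Golinskii--Kupin/Hansmann type) that converts a bound on a perturbation determinant into summability of its zeros; Theorem~\ref{main1} is then used to split the sums at the threshold $|E|^\gamma\simeq\int|V|^{\gamma+d/2}$ between the two claims.

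\emph{Step 1: Schatten estimate.} Set $p=\gamma+d/2$, fix $E\in\C\setminus[0,\infty)$, and factor $V=v_1v_2$ with $v_1=(\sgn V)|V|^{1/2}$ and $v_2=|V|^{1/2}$. The Birman--Schwinger operator
$$
K(E):=v_1(-\Delta-E)^{-1}v_2
$$
is compact and has $-1$ as an eigenvalue (with the correct algebraic multiplicity) precisely when $E$ is an eigenvalue of $-\Delta+V$. Splitting $K(E)=[v_1(-\Delta-E)^{-1/2}][(-\Delta-E)^{-1/2}v_2]$, applying Kato--Seiler--Simon in $S^{2p}$, and computing
$$
\int_{\R^d}\frac{dk}{|k^2-E|^p}\asymp_{\gamma,d}|E|^{d/2-1}\delta(E)^{1-p}
$$
by rescaling $k\mapsto|E|^{1/2}k$ and estimating the resulting angular integral near $\{k^2=E\}$, one obtains
$$
\|K(E)\|_{S^p}^p\le C_{\gamma,d}\,\frac{|E|^{d/2-1}}{\delta(E)^{p-1}}\int_{\R^d}|V|^p\,dx.
$$

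\emph{Step 2: Perturbation determinant.} Letting $n=\lceil p\rceil$, the function $d(E):=\det_n(I+K(E))$ is holomorphic on $\C\setminus[0,\infty)$, tends to $1$ as $E\to-\infty$, and vanishes precisely at the non-real eigenvalues $E_j$ of $-\Delta+V$ with the correct algebraic multiplicities. The standard inequality for regularized determinants together with Step~1 gives
$$
\log|d(E)|\le C_{\gamma,d}\,\frac{|E|^{d/2-1}}{\delta(E)^{p-1}}\int_{\R^d}|V|^p\,dx,
$$
and a Blaschke--Jensen inequality on $\C\setminus[0,\infty)$ applied to $d$ then yields, for any $\epsilon'>0$,
$$
\sum_j \frac{\delta(E_j)^{p}}{|E_j|^{d/2+\epsilon'}}\le L_{\gamma,d,\epsilon'}\int_{\R^d}|V|^p\,dx;
$$
the exponent $p$ on $\delta(E_j)$ matches $p-1$ in the log-bound plus a unit contribution from the Blaschke factor for the slit domain, and the $\epsilon'$-weight in $|E_j|$ absorbs the $|E|^{d/2-1}$ growth at infinity.

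\emph{Step 3: Splitting.} For \eqref{eq:main3}, the restriction $|E_j|^\gamma\leq C_{\gamma,d}\int|V|^p$ lets us bound $|E_j|^{d/2+\epsilon'}$ by a constant multiple of $\bigl(\int|V|^p\bigr)^{(d/2+\epsilon')/\gamma}$; inserting this into the display from Step~2 and raising to the power $\gamma/(\gamma+d/2)$ produces \eqref{eq:main3}. For \eqref{eq:main31}, the large-energy restriction $|E_j|^\gamma\ge\mu C_{\gamma,d}\int|V|^p$ acts as a Chebyshev condition producing the factor $\mu^{-\epsilon'/(\gamma-\epsilon')}$ after raising to the power $\gamma/(\gamma-\epsilon')$. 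The main obstacle is obtaining the sharp exponent $\gamma+d/2$ on $\delta(E_j)$ in Step~2 without the usual $\epsilon$-loss of the Borichev--Golinskii--Kupin framework: a generic application would produce $\delta(E_j)^{\gamma+d/2+\epsilon}$ as in Theorem~\ref{main2}, and the improvement in Theorem~\ref{main3} requires either a version of the Blaschke--Jensen inequality tailored to the specific $\delta(E)^{1-p}$-decay of our log-determinant bound on the slit domain, or a careful choice of regularization parameter that exploits the threshold restriction on $|E_j|$.
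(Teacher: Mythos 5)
Your proposal takes a genuinely different route from the paper, and unfortunately the gap you flag at the end of Step~2 is fatal rather than a technicality you can patch. The paper does \emph{not} prove Theorem~\ref{main3} via regularized determinants and Borichev--Golinskii--Kupin; it uses Hansmann's matrix inequality (a sharp Schatten--class perturbation bound for resolvents extending Kato's classical result) applied directly to $(-\Delta+V+a)^{-1}-(-\Delta+a)^{-1}$, bounded via the Kato--Seiler--Simon inequality, and then averages the resulting inequality in the shift parameter $a$. That inequality carries no $\epsilon$-loss, which is precisely why Theorem~\ref{main3} achieves the clean exponent $\gamma+d/2$ on $\delta(E_j)$ while Theorem~\ref{main2}, which \emph{is} proved by the determinant/BGK machinery, has the $\epsilon$ in $\delta(E_j)^{2\gamma+\epsilon}$.

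Concretely: your Step~1 Schatten bound
$\|K(E)\|_{S^p}^p \lesssim |E|^{d/2-1}\,\delta(E)^{1-p}\int|V|^p$
with $p=\gamma+d/2$ is plausible (it requires $p>d/2$, i.e. $\gamma>0$, for the Kato--Seiler--Simon integral to converge, so the $d=1$, $\gamma=1/2$ endpoint already needs separate care since then $p=1$ and the $\delta$--exponent degenerates). Feeding it into the BGK framework via Proposition~\ref{bgk} gives the exponent $p\rho+1+\epsilon$ on $\delta(E_j)$, where $\rho=(p-1)/p$ is the $\delta$--exponent of $\|K\|_{S^p}$; this equals $\gamma+d/2+\epsilon$, never $\gamma+d/2$. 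The $\epsilon$ is intrinsic to the Blaschke--type condition for analytic functions that blow up at the boundary: Proposition~\ref{bgk} with $\alpha>0$ is known to be optimal in general, so there is no ``tailored Blaschke--Jensen inequality'' or ``careful choice of regularization parameter'' that removes it in this generality. Your closing paragraph correctly identifies this as the main obstacle, but offers no mechanism to overcome it, so Step~2 does not deliver \eqref{eq:main3} as stated. The structural fix is to abandon the determinant route for this theorem and instead use, as the paper does, the inequality $\sum_j \dist((E_j+a)^{-1},[0,a^{-1}])^{q}\le \|(-\Delta+V+a)^{-1}-(-\Delta+a)^{-1}\|_{q}^{q}$ (Hansmann), the geometric estimate $\dist((E+a)^{-1},[0,a^{-1}])\ge \delta(E)/(8|E+a|(|E|+a))$, the Kato--Seiler--Simon bound for $\|(-\Delta+a)^{-1}\sqrt{|V|}\|_{2q}$ together with a Neumann--series inversion of $1+\sqrt{V}(-\Delta+a)^{-1}\sqrt{|V|}$ for $a$ large, and then an integration in $a$ to produce the two claims after splitting at $|E_j|^\gamma\asymp\int|V|^{\gamma+d/2}$.
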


\emph{Remarks.} (1) These two theorems have the following \emph{consequence}: if $(E_j)$ is a sequence of eigenvalues of $-\Delta+V$ in $L^2(\R^d)$ with $E_j\to E\in [0,\infty)$, then $(\im E_j)\in\ell^p$ for every $p>2\gamma$ if $1/2<\gamma<d/2$ and for $p=\gamma+d/2$ if $\gamma\geq d/2$, provided $V\in L^{\gamma+d/2}(\R^d)$.\\ 
(2) We particularly highlight the bound of Theorem \ref{main3} for $d=1$ and $\gamma=1/2$. Combined with \eqref{eq:davies} it implies that
$$
\left( \sum_j \delta(E_j) \right)^{1/2} \leq L_{1/2,1}' \int_\R |V| \,dx \,.
$$
(3) We stress that in Theorems \ref{main2} and \ref{main3} we distinguish between eigenvalues lying in a disk around the origin (whose radius is controlled in a scale-invariant way by a power of a norm of the potential) and eigenvalues lying outside this disk (which are constraint by \eqref{eq:main1cor} in terms of the norm of the potential).\\
(4) The proof of Theorem \ref{main2} is an extension of the proof of \eqref{eq:LTfrsa1} and \eqref{eq:LTfrsa2} in \cite{FrSa}. It is based on the analysis of zeroes of a regularized determinant. This method was pioneered in \cite{DeKa,BGK} and we use a remarkable theorem about zeroes of analytic functions due to Borichev, Golinskii and Kupin \cite{BGK}. The technical new ingredients are resolvent bounds in trace ideals related to the limiting absorption principle.\\
(5) The proof of Theorem \ref{main3} is based on a matrix inequality due to Hansmann \cite{Ha}, which extends a classical result of Kato. Inequalities similar to those in Theorem \ref{main3} were obtained in \cite{Ha,DeHaKa}, but they depend on a lower bound on $\re V$. We remove this dependence. Hansmann (private communication) has shown that the qualitative fact mentioned in (1) with $p=\gamma+d/2$ does not require $\re V$ to be bounded from below, and our Theorem \ref{main3} makes this quantitative.\\
(6) For the reader's convenience we summarize the results from \cite{DeHaKa} in Appendix \ref{sec:dhk} and compare them with our results. We will see that Theorems \ref{main2} and \ref{main3} are stronger in many situations.

\medskip

In Appendices \ref{sec:defop} and \ref{sec:zeroes} we provide proofs of `well-known' facts whose proofs we have not been able to find in the literature. They concern the spectrum of form compact perturbations of non-negative, self-adjoint operators and zeroes of regularized determinants.

\subsection*{Notation} The Schatten space with exponent $1\leq p<\infty$ is denoted by $\mathfrak S^p$ and its norm by $\|\cdot\|_p$, that is, the $\ell^p$-norm of the sequence of singular values. For $n\in\N$ we denote by $\det_n$ the $n$-th regularized determinant; see, e.g., \cite{Si}. We write $\sqrt{V(x)} =V(x)/\sqrt{|V(x)|}$ if $V(x)\neq 0$ and $\sqrt{V(x)}=0$ if $V(x)=0$. 

\subsection*{Acknowledgements} Fundamental for several of the new theorems here are results from \cite{FrSa}, which were obtained jointly with J. Sabin to whom the author is most grateful. He would also like to thank M. Demuth and M. Hansmann for fruitful discussions. This paper has its origin at the conference `Mathematical aspects of physics with non-self-adjoint operators' in June 2015 and the author is grateful to the organizers and the American Institute of Mathematics for the invitation. This paper was finished at the Mittag--Leffler Institute and the author is grateful to A. Laptev for the hospitality. Support through NSF grant DMS--1363432 is acknowledged.


\section{Resolvent bounds and the proof of Theorem \ref{main1}}

The proof of \eqref{eq:davies} in \cite{Fr} relies on bounds on the operator norm of $W_1(-\Delta-z)^{-1}W_2$ for $W_1,W_2\in L^{2\gamma+d}$ for $\gamma\leq 1/2$. Similarly, the proof of Theorem \ref{main1} will rely on such bounds for $\gamma\geq 1/2$. In fact, as shown in \cite{FrSa}, the operator norm bounds for $\gamma\leq 1/2$ can be improved to bounds in trace ideals, and here we will also prove the corresponding bounds for $\gamma\geq 1/2$ directly in trace ideals.

\begin{proposition}\label{res}
Let $d\geq 1$ and $\gamma\geq 1/2$. Then there is a constant $C$ such that for all $W_1,W_2\in L^{2(\gamma+d/2)}(\R^d)$ and all $z\in\C\setminus[0,\infty)$,
\begin{equation}
\label{eq:res}
\| W_1 (-\Delta-z)^{-1} W_2 \|_{2(\gamma+d/2)} \leq C \delta(z)^{-1+\frac{(d+1)/2}{\gamma+d/2}} |z|^{-\frac{1}{2(\gamma+d/2)}} \|W_1\|_{2(\gamma+d/2)} \|W_2\|_{2(\gamma+d/2)} \,.
\end{equation}
\end{proposition}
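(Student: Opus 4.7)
The plan is to prove \eqref{eq:res} by complex interpolation between two endpoint bounds, corresponding to the extremes $\gamma=1/2$ and $\gamma=\infty$. The first endpoint is the ``$\gamma=1/2$'' trace-ideal resolvent bound established in \cite{FrSa},
\begin{equation*}
\|W_1(-\Delta-z)^{-1}W_2\|_{d+1} \leq C\,|z|^{-1/(d+1)}\|W_1\|_{d+1}\|W_2\|_{d+1}\qquad(z\in\C\setminus[0,\infty)),
\end{equation*}
which is already \eqref{eq:res} at $\gamma=1/2$ because the exponent $-1+(d+1)/(2\gamma+d)$ of $\delta(z)$ vanishes there. The second endpoint is the trivial ``$\gamma=\infty$'' operator-norm bound
\begin{equation*}
\|W_1(-\Delta-z)^{-1}W_2\|_{\ope} \leq \delta(z)^{-1}\|W_1\|_\infty\|W_2\|_\infty,
\end{equation*}
which follows from the spectral theorem for $-\Delta$ and H\"older.

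Set $p:=2(\gamma+d/2)$ and $\theta:=(d+1)/p=(d+1)/(2\gamma+d)\in(0,1]$. For each fixed $z\in\C\setminus[0,\infty)$ I would introduce the analytic family of operators
\begin{equation*}
T_s := W_{1,s}(-\Delta-z)^{-1}W_{2,s},\qquad W_{j,s}(x) := \frac{W_j(x)}{|W_j(x)|}\,|W_j(x)|^{s/\theta},
\end{equation*}
on the strip $\{0\leq\re s\leq 1\}$ (with $W_{j,s}(x)=0$ where $W_j(x)=0$). Then $W_{j,\theta}=W_j$, so $T_\theta$ is the operator we want to bound; meanwhile a direct computation gives $\|W_{j,s}\|_\infty\leq 1$ on $\{\re s=0\}$ and $\|W_{j,s}\|_{d+1}=\|W_j\|_p^{1/\theta}$ on $\{\re s=1\}$. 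Substituting into the two endpoint bounds yields
\begin{equation*}
\|T_s\|_\ope\leq \delta(z)^{-1}\ \text{on}\ \re s=0,\qquad \|T_s\|_{d+1}\leq C|z|^{-1/(d+1)}\|W_1\|_p^{1/\theta}\|W_2\|_p^{1/\theta}\ \text{on}\ \re s=1.
\end{equation*}
Stein's complex interpolation theorem for the Schatten scale, applied to $T_s$, then gives at the intermediate point $s=\theta$
\begin{equation*}
\|T_\theta\|_p \leq \bigl(\delta(z)^{-1}\bigr)^{1-\theta}\bigl(C|z|^{-1/(d+1)}\bigr)^\theta\,\|W_1\|_p\|W_2\|_p,
\end{equation*}
which is \eqref{eq:res} once one observes that $1-\theta=1-(d+1)/(2(\gamma+d/2))$ is precisely the negative of the exponent of $\delta(z)$ in \eqref{eq:res} and that $\theta/(d+1)=1/(2(\gamma+d/2))$.

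The main obstacle is the technical verification that $T_s$ is admissible for Stein interpolation, i.e.\ analyticity in $s$ and subexponential growth of $\log\|T_{a+it}\|$ on each vertical line. Both become routine once one reduces, by density, to $W_1,W_2$ that are bounded, compactly supported, and bounded away from zero on their support: for such $W_j$ the function $s\mapsto|W_j(x)|^{s/\theta}$ is a genuine analytic function on the strip whose sup-norm grows only polynomially in $|\im s|$, and the resulting family $T_s$ is operator-valued analytic in a neighborhood of the closed strip. The bound for arbitrary $W_1,W_2\in L^p$ is then recovered by approximation in the $\mathfrak{S}^p$-norm.
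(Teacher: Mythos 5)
Your proof is correct and takes essentially the same approach as the paper: complex interpolation between the trace-ideal bound of Frank--Sabin at $\gamma=1/2$ and the trivial operator-norm bound corresponding to $\gamma=\infty$, with the same interpolation parameter $\theta=(d+1)/(2\gamma+d)$. The paper unwinds the Stein interpolation by testing against a finite-rank operator $K$ and applying Hadamard's three-line lemma to the scalar function $\tr\bigl(J_1|W_1|^{\zeta/\theta}(-\Delta-z)^{-1}|W_2|^{\zeta/\theta}J_2U|K|^{(d+1-\zeta)/(d+1-\theta)}\bigr)$, whereas you invoke the Schatten-scale Stein theorem as a black box; these are the same argument presented at two levels of abstraction.
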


\begin{proof}
The result follows by complex interpolation between the bound for $\gamma=1/2$ proved in \cite{FrSa},
\begin{equation}
\label{eq:frsa}
\| W_1 (-\Delta-z)^{-1} W_2 \|_{d+1} \leq C |z|^{-\frac{1}{d+1}} \|W_1\|_{d+1} \|W_2\|_{d+1} \,,
\end{equation}
and the trivial bound for $\gamma=\infty$,
\begin{equation}
\label{eq:trivial}
\| W_1 (-\Delta-z)^{-1} W_2 \|_\infty \leq \delta(z)^{-1} \|W_1\|_\infty \|W_2\|_\infty \,.
\end{equation}
To carry out the details of the interpolation step we fix $\gamma>1/2$, bounded functions $W_j$ of compact support, a finite rank operator $K$ on $L^2(\R^d)$ as well as $z\in\C\setminus[0,\infty)$. We write $K=U|K|$ with a partial isomety $U$ and $W_j=J_j |W_j|$ with $|J_j|\leq 1$. For $\zeta\in\C$ with $0\leq\re\zeta\leq 1$ we define
$$
f(\zeta) := \tr J_1 |W_1|^{\zeta/\theta} (-\Delta-z)^{-1} |W_2|^{\zeta\theta} J_2 U |K|^{(d+1-\zeta)/(d+1-\theta)}
$$
with $\theta:= (d+1)/(d+2\gamma)$. Then $f$ is a bounded continuous function in $\{0\leq\re\zeta\leq 1\}$ which is analytic in its interior. Moreover, if $t\in\R$, by \eqref{eq:trivial},
$$
|f(it)| \leq \delta(z)^{-1} \tr |K|^{(d+1)/(d+1-\theta)} = \delta(z)^{-1} \|K\|_{(d+1)/(d+1-\theta)}^{(d+1)/(d+1-\theta)}
$$
and, by \eqref{eq:frsa},
\begin{align*}
|f(1+it)| & \leq C |z|^{-\frac{1}{d+1}} \||W_1|^{1/\theta}\|_{d+1} \||W_2|^{1/\theta}\|_{d+1} \||K|^{d/(d+1-\theta)}\|_{(d+1)/d} \\
& = C |z|^{-\frac{1}{d+1}} \|W_1\|_{(d+1)/\theta}^{1/\theta} \|W_2\|_{(d+1)/\theta}^{1/\theta} \|K\|_{(d+1)/(d+1-\theta)}^{d/(d+1-\theta)} \,.
\end{align*}
Thus, by Hadamard's three line lemma,
\begin{equation}
\label{eq:hadamard}
|f(\theta)| \leq C^\theta |z|^{-\frac{\theta}{d+1}} \delta(z)^{-1+\theta} \|K\|_{(d+1)/(d+1-\theta)} \|W_1\|_{(d+1)/\theta} \|W_2\|_{(d+1)/\theta} \,.
\end{equation}
By duality and by recalling the definition of $\theta$, this yields the bound in the proposition. Finally, by a density argument the bound is extended to all $W_j\in L^{2(\gamma+d/2)}(\R^d)$.
\end{proof}

\emph{Remark.}
If $d=1$ we can use the explicit form of the integral kernel of $(-\Delta-z)^{-1}$ to see that \eqref{eq:frsa} holds with $C=1/2$. Thus, in this case \eqref{eq:hadamard} yields the explicit bound
\begin{equation*}
\| W_1 (-\Delta-z)^{-1} W_2 \|_{2(\gamma+1/2)} \leq 2^{-\frac{1}{\gamma+1/2}} \delta(z)^{-1+\frac{1}{\gamma+1/2}} |z|^{-\frac{1}{2(\gamma+1/2)}} \|W_1\|_{2(\gamma+1/2)} \|W_2\|_{2(\gamma+1/2)} \,.
\end{equation*}

\bigskip

We are now in position to prove Theorem \ref{main1}. We use the same strategy as in \cite{AAD,Fr}.

\begin{proof}[Proof of Theorem \ref{main1}]
If $E\in\C\setminus[0,\infty)$ is an eigenvalue of $-\Delta+V$, then $-1$ is an eigenvalue of the Birman--Schwinger operator $\sqrt{V}(-\Delta-E)^{-1}\sqrt{|V|}$. Thus,
$$
1 \leq \left\| \sqrt{V} (-\Delta-E)^{-1} \sqrt{|V|} \right\| \leq \left\| \sqrt{V} (-\Delta-E)^{-1} \sqrt{|V|} \right\|_{2(\gamma+d/2)} \,.
$$
According to Proposition \ref{res}, the right side is not greater than
\begin{align*}
& C_{\gamma,d}\, \delta(E)^{-1+\frac{(d+1)/2}{\gamma+d/2}} |E|^{-\frac{1}{2(\gamma+d/2)}} \|\sqrt{V} \|_{2(\gamma+d/2)} \|\sqrt{|V|}\|_{2(\gamma+d/2)} \\
& \qquad = C_{\gamma,d}\, \delta(E)^{-1+\frac{(d+1)/2}{\gamma+d/2}} |E|^{-\frac{1}{2(\gamma+d/2)}} \|V \|_{\gamma+d/2} \,.
\end{align*}
This yields the bound of the theorem.
\end{proof}

\emph{Remark.} It is clear from the previous proof that for Theorem \ref{main1} we do not need the full strength of Proposition \ref{res}. A bound for the operator norm on the left side of \eqref{eq:res} would suffice. In order to prove such a bound one can replace \eqref{eq:frsa} by the uniform resolvent bounds of Kenig--Ruiz--Sogge \cite{KRS} (see also \cite{Fr}) and use again complex interpolation. We shall, however, need the trace ideal bounds in the proof of Theorem~\ref{main2}.


\section{A quantitative version of the analytic Fredholm alternative}\label{sec:af}

The analytic Fredholm alternative says that if $K(z)$, $z\in\Omega$, is an analytic family of compact operators on some connected open set $\Omega\subset\C$, then the set $\Sigma=\{z\in\Omega:\ -1\ \text{is an eigenvalue of}\ K(z)\}$ is at most countable and has no accumulation point in $\Omega$ (unless it coincides with $\Omega$). In this section we estimate the possible accumulation of $\Sigma$ at $\partial\Omega$ in the special case that $\Omega=\C\setminus[0,\infty)$ and under the assumption that $K(z)$ belongs to some trace ideal.

We begin by reviewing some facts about analytic families of operators and we refer, for instance, to \cite{GGK} for more details. Let $\Omega\subset\C$ be an open set and let $W(z)$, $z\in\Omega$, be an analytic family of bounded operators. A number $z_0\in\Omega$ is called an \emph{eigenvalue of finite type} of $W$, if $\ker W(z_0)\not =\{0\}$, if $W(z_0)$ is Fredholm (that is, both $\dim \ker W(z_0)$ and $\mathrm{codim} \ran W(z_0)$ are finite) and if $W(z)$ is invertible for all $0<|z-z_0|<\epsilon$ for some $\epsilon>0$. It follows from \cite[Thm.$\!$ XI.8.1]{GGK} that, if $z_0$ is an eigenvalue of finite type of $W$, there are
\begin{itemize}
\item[$\bullet$] $r\in\N$,
\item[$\bullet$] $P_0,\ldots, P_r$ mutually disjoint projections with $P_1,\ldots,P_r$ of rank one and $P_0 = 1-P_1-\ldots-P_r$,
\item[$\bullet$] positive integers $k_1\leq\ldots\leq k_r$, and
\item[$\bullet$] analytic families of operators $E$ and $G$, which are defined in a neighborhood $\mathcal U$ of $z_0$ in $\Omega$ and which are invertible in $\mathcal U$,
\end{itemize}
such that for all $z\in\mathcal U$,
\begin{equation}
\label{eq:factorization}
W(z) = E(z) \left( P_0 + (z-z_0)^{k_1} P_1 + \ldots + (z-z_0)^{k_r} P_r \right) G(z)
\end{equation}
The sum $k_1+\ldots+k_r$ is called the \emph{algebraic multiplicity} of the eigenvalue $z_0$.

After these preparations we are ready to state our main result about eigenvalues of analytic families of operators in $\C\setminus[0,\infty)$.

\begin{theorem}\label{af}
Let $\rho\geq 0$ and $\sigma\in\R$ with $\rho+\sigma>0$ and let $p\geq 1$. Let $K(z)$, $z\in\C\setminus[0,\infty)$, be an analytic family of operators satisfying
$$
\|K(z)\|_p \leq M \delta(z)^{-\rho}|z|^{-\sigma}
\qquad\text{for all}\ z\in\C\setminus[0,\infty) \,.
$$
Let $z_j\in\C\setminus[0,\infty)$ be the eigenvalues of $1+K$ of finite type, repeated according to their algebraic multiplicity. Then, if $\rho>0$, for all $\epsilon,\epsilon'>0$,
$$
\sum_{|z_j|\leq M^{1/(\rho+\sigma)}} \delta(z_j)^{p\rho +1 + \epsilon} |z_j|^{- \frac12(p\rho + 1+\epsilon)+ \frac12(p\rho+2p\sigma-1+\epsilon)_+} \leq C M^{\frac1{2(\rho+\sigma)}\left(p\rho + 1+\epsilon+(p\rho+2p\sigma-1+\epsilon)_+ \right)}
$$
and
$$
\sum_{|z_j|\geq \nu M^{1/(\rho+\sigma)}} \delta(z_j)^{p\rho +1 + \epsilon} |z_j|^{\rho+\sigma - p\rho -1 -\epsilon - \epsilon'} \leq C' \nu^{-\epsilon'} M^{\frac{\rho+\sigma-\epsilon'}{\rho+\sigma}} 
\qquad\text{for all}\ \nu\geq 1 \,.
$$
Moreover, if $\rho=0$, then $|z_j|\leq M^{1/\sigma}$ for all $j$, and for all $\epsilon>0$,
$$
\sum \delta(z_j) |z_j|^{- \frac12+\frac12(2p\sigma-1+\epsilon)_+} \leq C M^{\frac1{2\sigma}\left(1+(2p\sigma-1+\epsilon)_+\right)} \,.
$$
Here $C$ (and $C'$) depend only on $\rho$, $\sigma$, $p$, $\epsilon$ (and $\epsilon'$).
\end{theorem}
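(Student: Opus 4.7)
The plan is to translate the eigenvalue-counting problem for $1+K$ into a zero-counting problem for a scalar analytic function on the unit disk, and then invoke the theorem of Borichev, Golinskii, and Kupin from \cite{BGK}. Set $n:=\lceil p\rceil$ and define $h(z):=\det_n(1+K(z))$ on $\C\setminus[0,\infty)$. Since $K(z)\in\mathfrak{S}^p\subset\mathfrak{S}^n$ depends analytically on $z$, $h$ is analytic, and by the factorization \eqref{eq:factorization} combined with the standard properties of the regularized determinant (details in Appendix~\ref{sec:zeroes}) its zeros in $\C\setminus[0,\infty)$, counted with multiplicity, coincide with the eigenvalues of $1+K$ of finite type counted with algebraic multiplicity. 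Simon's inequality $|\det_n(1+K)|\leq\exp(\Gamma_p\|K\|_p^p)$ for $K\in\mathfrak{S}^p$ and the hypothesis give
\[
\log|h(z)|\leq \Gamma_p M^p\,\delta(z)^{-p\rho}|z|^{-p\sigma}.
\]

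To absorb $M$ into a natural scale I would fix $\lambda:=M^{1/(\rho+\sigma)}$ and set $h_\lambda(z):=h(\lambda z)$; because $\delta(\lambda z)=\lambda\,\delta(z)$ and $|\lambda z|=\lambda|z|$ this reduces to $\log|h_\lambda(z)|\leq \Gamma_p\,\delta(z)^{-p\rho}|z|^{-p\sigma}$, with no $M$-dependence. Compose with the conformal bijection $\Phi:\C\setminus[0,\infty)\to\mathbb{D}$, $\Phi(z)=(\sqrt{-z}-1)/(\sqrt{-z}+1)$ (principal branch), use the elementary identities
\[
|z|=\frac{|1+\zeta|^2}{|1-\zeta|^2},\qquad \delta(z)\asymp\frac{(1-|\zeta|^2)|1+\zeta|}{|1-\zeta|^3},
\]
and drop the harmless factor $|1-\zeta|^{p(3\rho+2\sigma)}$ (its exponent $3\rho+2\sigma=\rho+2(\rho+\sigma)$ is nonnegative), obtaining
\[
\log|\tilde h_\lambda(\zeta)|\leq \frac{C}{(1-|\zeta|^2)^{p\rho}\,|1+\zeta|^{(p(\rho+2\sigma))_+}}.
\]
After a Blaschke-factor normalization at $\zeta=0$ (to handle a possible zero there), the BGK theorem with $\alpha=p\rho$ and single boundary singularity $\xi=-1$ of weight $\beta=(p(\rho+2\sigma))_+$ yields, for every $\tau>0$,
\[
\sum_m (1-|\zeta_m|)^{p\rho+1+\tau}\,|\zeta_m+1|^{(p(\rho+2\sigma)-1+\tau)_+}\leq C(\tau),
\]
where $\zeta_m=\Phi(z_m/\lambda)$ ranges over all finite-type eigenvalues.

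I would then translate back using $1-|\zeta|\asymp \delta(z)/(|z|^{1/2}(1+|z|^{1/2})^2)$ and $|\zeta+1|\asymp |z|^{1/2}/(1+|z|^{1/2})$. For $|z_m|\leq \lambda$ one has $|z_m/\lambda|\leq 1$, so $(1+|z_m/\lambda|^{1/2})\asymp 1$; direct substitution and restoration of the $\lambda$-factors gives exactly the first displayed inequality of the theorem, the power of $M$ arising from $\lambda^{(\cdots)/2}=M^{(\cdots)/(2(\rho+\sigma))}$. The case $\rho=0$ is parallel: the bound $|z_m|\leq M^{1/\sigma}$ follows directly from $\|K(z_m)\|_p\leq M|z_m|^{-\sigma}$ combined with $\|K(z_m)\|\geq 1$, and the exponent $1$ of $\delta$ in the stated sum arises because for $\alpha=0$ the BGK estimate coincides with the classical (weighted) Blaschke condition.

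The large-$|z|$ regime is the main obstacle. For $|z_m|\geq \nu\lambda$ the direct back-translation in the Case B asymptotic $(1+|z_m/\lambda|^{1/2})\asymp |z_m/\lambda|^{1/2}$ produces a weight of the form $|z_m|^{-3(p\rho+1+\epsilon)/2}$, which is too restrictive to match the claimed $|z_m|^{\rho+\sigma-p\rho-1-\epsilon-\epsilon'}$. To reach the stated form I would combine the BGK inequality with the pointwise Birman--Schwinger constraint $\delta(z_m)^{\rho}|z_m|^{\sigma}\leq M$, which holds at every eigenvalue because $\|K(z_m)\|\geq 1$, and then interpolate: spending a fractional power $\epsilon'$ of this constraint against the BGK decay reshapes the $|z|$-weight to the desired one and produces the factor $\nu^{-\epsilon'}M^{(\rho+\sigma-\epsilon')/(\rho+\sigma)}$ on the right-hand side. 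The delicate point is precisely this interpolation: the conformal geometry of $\C\setminus[0,\infty)$ naturally yields a stronger $|z|$-weight than is claimed, and only by trading a piece of that decay against the operator-norm constraint does one recover the cleaner form with the $\nu^{-\epsilon'}$ prefactor.
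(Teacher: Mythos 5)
Your overall framework---regularized determinant, conformal transplant to the disk, Borichev--Golinskii--Kupin---is exactly what the paper uses, and the first displayed inequality and the $\rho=0$ case would go through essentially as you describe (modulo a normalization subtlety noted below). The genuine gap is in the large-$|z|$ regime, and your proposed fix does not work.

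You correctly observe that with the conformal map pinned at a single scale (your $\Phi$, which is $\psi^{-1}$ with parameter $a=1$ in the paper's notation), the back-translation yields the weight $|z_m|^{-\frac{3}{2}(p\rho+1+\epsilon)}$, and you correctly note this is not what is claimed. But the proposed remedy---feeding in the pointwise constraint $\delta(z_m)^\rho|z_m|^\sigma\leq M$ and ``spending a fractional power $\epsilon'$'' of it---cannot produce the stated exponent. The constraint only lets you \emph{decrease} the power of $|z|$ while simultaneously \emph{increasing} the power of $\delta$ (since $\delta^\rho|z|^\sigma\leq M$ is multiplied in, not divided out), whereas the target weight has the same $\delta$-exponent $p\rho+1+\epsilon$ as the BGK output and a \emph{less negative} $|z|$-exponent. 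Trying to use BGK with a smaller $\epsilon_1<\epsilon$ and then adjusting with the constraint, the $|z|$-exponent deficit is of order $\rho+\sigma+\tfrac12(p\rho+1)$, which cannot be closed for small $\epsilon$; and when $\sigma\leq 0$ the constraint does not help at all. Moreover the factor $\nu^{-\epsilon'}$ is asserted rather than derived.

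The idea you are missing is that the conformal map parameter must be kept free: for each $a\geq A$ one writes $g_a(w)=h(\psi_a(w))/h(-a)$ with $\psi_a(w)=-a\bigl(\tfrac{1+w}{1-w}\bigr)^2$, applies BGK, and translates back using Koebe to obtain a one-parameter family of inequalities of the form
\[
\sum_j \frac{\delta(z_j)^{p\rho+1+\epsilon}\,|z_j|^{\cdots}}{(|z_j|+a)^{(p\rho+1+\epsilon)+\cdots}}\leq C\,a^{-(\rho+\sigma)-\frac{p\rho+1+\epsilon}{2}}\,.
\]
One then multiplies by $a^{(\rho+\sigma)+\frac{p\rho+1+\epsilon}{2}-1-\epsilon'}$ and integrates $a$ from $\nu A$ to $\infty$. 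The lower integration limit produces $\nu^{-\epsilon'}$, and the change of variable $r=a/|z_j|$ inside the integral (valid since $|z_j|\geq\nu$) converts the $(|z_j|+a)^{-\cdots}$ kernel into the stated power of $|z_j|$. This averaging-over-$a$ trick (from Demuth--Hansmann--Katriel) is the mechanism, not interpolation against the Birman--Schwinger constraint. Separately, fixing $a=1$ in the small-eigenvalue regime is problematic because one cannot guarantee $h(-1)$ is bounded away from zero; the paper's requirement $a\geq A$ with $A$ chosen so that $\|K(-a)\|_p\leq a^{-(\rho+\sigma)}<1$ is what makes the normalization $g(0)=1$ legitimate, and a Blaschke factor at the origin handles a possible zero but not a small nonzero value.
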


The proof of Theorem \ref{af} relies on the identification of the eigenvalues of finite type of $1+K$, counting their algebraic multiplicities, with the zeroes of an analytic function, counting their orders. The analytic function in question is a regularized determinant and we refer to \cite{Si} for their definition and basic properties.

\begin{lemma}\label{zeroes}
Let $\Omega\subset\C$ be a connected, open set and $W(z)$, $z\in\Omega$, an analytic family of bounded operators such that for some $n\in\N$, $W(z)-1\in\mathfrak S^n$ for all $z\in\Omega$. Then $\det_n W$ is an analytic function in $\Omega$. Moreover, let $z_0\in\Omega$ and assume that $W(z_*)$ is invertible for some $z_*\in\Omega$. Then $\det_n W(z_0)=0$ if and only if $z_0$ is an eigenvalue of finite type of $W$. In this case the order of the zero of $\det_n W$ at $z_0$ coincides with the algebraic multiplicity of $z_0$ as an eigenvalue of $W$.
\end{lemma}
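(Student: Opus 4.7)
The plan is to establish the three assertions in turn. For \emph{analyticity of $\det_n W$}: since $\det_n$ is continuous on the Banach space $\mathfrak S^n$ (see \cite{Si}), it suffices to show $W-1$ is analytic as an $\mathfrak S^n$-valued function. The operator-analyticity of $W$ combined with $W(z)-1\in\mathfrak S^n$ pointwise implies weak analyticity of $W-1$ into $\mathfrak S^n$ via the standard trace duality, and Dunford's theorem upgrades this to norm analyticity.

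For the \emph{zero characterization}: the fact $\det_n(1+A)\neq 0\Leftrightarrow 1+A$ is invertible (valid for $A$ compact) shows that the zeros of $\det_n W$ are exactly the points where $W$ fails to be invertible. At such a $z_0$, compactness of $W(z_0)-1$ makes $W(z_0)$ Fredholm of index zero, so $\dim\ker W(z_0)$ and $\codim\ran W(z_0)$ are finite. Invertibility at $z_*$ in the connected set $\Omega$ together with the analytic Fredholm alternative forces $W$ to be invertible on a punctured neighborhood of $z_0$. These are exactly the finite-type conditions.

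For \emph{order equals algebraic multiplicity}: invoke the factorization \eqref{eq:factorization}: near $z_0$, $W(z)=E(z)D(z)G(z)$ with $E,G$ analytic invertible and $D(z)=P_0+\sum_{i=1}^r(z-z_0)^{k_i}P_i$. Set $F:=EG$ and $H:=G^{-1}DG$, so $W=FH$. Since $D-1$ is finite-rank, $H-1=G^{-1}(D-1)G$ is also finite-rank, and the identity $F-1=(W-1)-E(D-1)G$ combined with $W-1\in\mathfrak S^n$ gives $F-1\in\mathfrak S^n$. The conjugates $Q_i:=G^{-1}P_iG$ are mutually disjoint rank-one idempotents, so the nonzero eigenvalues of $H(z)-1$ are the simple values $\lambda_i(z):=(z-z_0)^{k_i}-1$. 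Lidskii's formula for the regularized determinant yields
\[
\det_n H(z)=\prod_{i=1}^r(z-z_0)^{k_i}\exp\!\left(\sum_{k=1}^{n-1}\frac{(-\lambda_i(z))^k}{k}\right),
\]
analytic with a zero of order exactly $m:=k_1+\cdots+k_r$ at $z_0$ (the exponential factor is nonvanishing).

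Finally, apply the multiplicative formula for $\det_n$ (cf.\ \cite{Si}): for $A,B\in\mathfrak S^n$,
\[
\det_n\bigl((1+A)(1+B)\bigr)=\det_n(1+A)\,\det_n(1+B)\,\exp\bigl(\Gamma_n(A,B)\bigr),
\]
where $\Gamma_n$ is a universal polynomial in traces of mixed monomials in $A,B$ of joint degree at least $n$ (each lying in $\mathfrak S^1$). With $A(z)=F(z)-1$ and $B(z)=H(z)-1$ the correction $\exp\Gamma_n(A,B)$ is analytic and nonvanishing near $z_0$, and $\det_n F(z_0)\neq 0$ since $F(z_0)$ is invertible. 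Hence $\det_n W(z)=(z-z_0)^m h(z)$ with $h$ analytic and $h(z_0)\neq 0$, completing the proof. The main obstacle is the careful invocation of this product formula — confirming trace-class membership of each monomial in $\Gamma_n$ and verifying the correction factor is indeed analytic and nonvanishing at $z_0$.
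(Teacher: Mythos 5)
Your strategy is the same in outline as the paper's: reduce analyticity to continuity of $\det_n$ on $\mathfrak S^n$, characterize zeros via invertibility and the Fredholm index, and then use the local factorization \eqref{eq:factorization} together with a multiplicative identity for $\det_n$ to match the order of the zero with $k_1+\cdots+k_r$. The interesting divergence is in the last step. You regroup $W=EDG$ as $W=FH$ with $F=EG$ and $H=G^{-1}DG$ and invoke a \emph{two-factor} multiplicativity
\[
\textstyle{\det_n}\bigl((1+A)(1+B)\bigr)=\textstyle{\det_n}(1+A)\,\textstyle{\det_n}(1+B)\,e^{\Gamma_n(A,B)},\qquad A,B\in\mathfrak S^n,
\]
claiming that $\Gamma_n$ consists only of traces of mixed monomials of joint degree at least $n$. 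The paper instead proves a bespoke \emph{three-factor} identity (Lemma \ref{detprod}), keeping $\det_n\!\left((1+K)(1+L)\right)$ together and peeling off only the finite-rank middle factor as a genuine $\det(1+F)$; there the trace-class nature of the correction $p_n(K,F,L)$ is immediate, since every monomial contains a factor of $F$. Your version is true, but it rests on a non-trivial cancellation: naively each summand $\tr\bigl[(A+B+AB)^m-A^m-B^m\bigr]$ with $m<n$ contains low-degree pieces such as $\tr AB$, which is \emph{not} defined for general $A,B\in\mathfrak S^n$ when $n\geq 3$. These low-degree pieces do cancel across $m$ (this follows, e.g., from the expansion $\log\det_n(1+X)=\sum_{m\geq n}\tfrac{(-1)^{m+1}}{m}\tr X^m$), but that cancellation needs to be stated and proved; it is not, as far as I can tell, a stated theorem in \cite{Si}, and the paper goes out of its way to avoid having to prove it. Since in your application $B=H-1$ is in fact finite rank, you could sidestep the issue entirely by using only that case, where every mixed monomial is trace class for trivial reasons — that would make your argument self-contained in the same way the paper's Lemma \ref{detprod} is. Two smaller points: in your formula for $\det_n H(z)$ the exponent should carry a sum (or product) over $i$ as well; and the appeal to Dunford's theorem to upgrade pointwise $\mathfrak S^n$-membership plus operator-norm analyticity to $\mathfrak S^n$-analyticity needs a local uniform bound on $\|W(z)-1\|_{\mathfrak S^n}$, which is not automatic from the hypotheses — this is one reason the paper simply cites \cite{Si1} for the analyticity of $\det_n W$ rather than rederiving it.
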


This lemma seems to be well-known and, for instance, in the proof of \cite[Thm.$\!$ 21]{LaSu} there are some hints on how to prove this. We have not found a complete proof in the literature, however, and so for the convenience of the reader we provide one in Appendix~\ref{sec:zeroes}.

Lemma \ref{zeroes} reduces the study of eigenvalues to the study of zeroes of analytic functions. A key ingredient in our proof is the following result of Borichev--Golinskii--Kupin \cite{BGK}, which generalizes Blaschke's theorem about the zeroes of bounded analytic functions to functions which blow up rapidly at the boundary.

\begin{proposition}\label{bgk}
Let $\alpha,\beta\geq 0$ and let $g$ be an analytic function in $\mathbb D$ satisfying $g(0)=1$ and
$$
\ln|g(w)|\leq K (1-|w|)^{-\alpha} |w+1|^{-\beta}
\qquad\text{for all}\ w\in\mathbb D \,.
$$
Let $w_j\in\mathbb D$ be the zeroes, counting multiplicities, of $g$. Then, for any $\epsilon>0$,
$$
\sum (1-|w_j|)^{\alpha+1+\epsilon} |w_j+1|^{(\beta-1+\epsilon)_+} \leq C_{\alpha,\beta,\epsilon} K
$$ 
If $\alpha=0$, then
$$
\sum (1-|w_j|) |w_j+1|^{(\beta-1+\epsilon)_+} \leq C_{1,\beta,\epsilon} K \,.
$$
\end{proposition}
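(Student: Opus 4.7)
The result refines the classical Blaschke condition (which states $\sum(1-|w_j|)<\infty$ for bounded analytic functions on $\mathbb D$) by allowing $\ln|g|$ to blow up like $(1-|w|)^{-\alpha}$ as $|w|\to 1$ and like $|w+1|^{-\beta}$ near $w=-1$. Accordingly, the natural strategy is to generalize Jensen's formula by integrating $\ln|g|$ against a carefully chosen weighted measure on $\mathbb D$ whose total mass reproduces the left-hand-side sum and whose pairing with the growth bound on $\ln|g|$ is finite. The plan is to (i) set up the weighted Jensen identity, (ii) choose the weight, and (iii) verify integrability against the radial and angular singularities.

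For step (i), I would start from Jensen's formula on circles of radius $r<1$, namely $\sum_{|w_j|<r}\log(r/|w_j|)=\frac{1}{2\pi}\int_0^{2\pi}\log|g(re^{i\theta})|\,d\theta$, and integrate this identity in $r$ against a nonnegative weight of the form $\psi(r,\theta)=\phi(r)\,\chi(\theta)$ designed to produce both the radial factor $(1-|w_j|)^{\alpha+1+\epsilon}$ and the angular factor $|w_j+1|^{(\beta-1+\epsilon)_+}$ when summed over zeros. Concretely I would pick $\phi(r)\asymp (1-r)^{\alpha+\epsilon}$ and $\chi(\theta)\asymp|e^{i\theta}+1|^{(\beta-1+\epsilon)_+}$ (regularized near $\theta=\pi$); after double integration the lower Riesz mass at $w_j$ gives the claimed weight (up to constants depending on $\alpha,\beta,\epsilon$). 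The upper bound on the resulting integral of $\ln|g|$ then follows by substituting the hypothesis $\ln|g(w)|\le K(1-|w|)^{-\alpha}|w+1|^{-\beta}$ and checking convergence of
\[
\int_0^1 (1-r)^{\alpha+\epsilon}(1-r)^{-\alpha}\,dr\cdot \int_0^{2\pi}|e^{i\theta}+1|^{(\beta-1+\epsilon)_+ - \beta}\,d\theta,
\]
both of which converge precisely because of the extra $\epsilon$ in each exponent.

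A technical point is that $\ln|g|$ is not of one sign, so to pass to the one-sided upper Jensen inequality one uses only the positive part of $\log|g|$ on the left, together with $\log|g(0)|=0$, which is exactly the subharmonic/Nevanlinna machinery underlying Blaschke's theorem. Equivalently, one may first factor out a partial Blaschke product and argue with the remaining nonvanishing analytic function — either route reduces the statement to the integrability check above.

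The main obstacle I anticipate is handling the two boundary singularities simultaneously and sharply: the exponent $(\beta-1+\epsilon)_+$ rather than $\beta-1+\epsilon$ reflects a threshold phenomenon (for $\beta<1$ the angular weight is unnecessary, since $\int|e^{i\theta}+1|^{-\beta}d\theta$ already converges), and extracting the correct power demands a careful bookkeeping of the $\theta$-integral near $\theta=\pi$. The special case $\alpha=0$ allows one to omit the $\epsilon$ in the radial weight, because then $\ln|g|$ is bounded in $r$ uniformly away from $w=-1$ and the classical Blaschke bound $\sum(1-|w_j|)<\infty$ holds on each sector away from $w=-1$, so only the angular regularization is needed.
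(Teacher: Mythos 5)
The paper does not prove this proposition; it is quoted verbatim from Borichev--Golinskii--Kupin \cite{BGK} and used as a black box, so there is no internal proof to compare against. Judged on its own merits, your sketch has a structural gap that I do not think is repairable along the lines you describe.

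The decisive problem is that Jensen's formula on the circle of radius $r$,
\[
\sum_{|w_j|<r}\log\frac{r}{|w_j|}=\frac{1}{2\pi}\int_0^{2\pi}\log|g(re^{i\theta})|\,d\theta ,
\]
sees the zeros only through their moduli $|w_j|$. If you integrate this identity in $r$ against $\phi(r)\chi(\theta)$, the $\theta$-integral factors out as a constant and the zero-side of the identity acquires no dependence whatsoever on $\arg w_j$ or on $|w_j+1|$. In other words, you cannot manufacture the angular factor $|w_j+1|^{(\beta-1+\epsilon)_+}$ from a radial average. What your computation actually yields is: integrate the angular singularity, $\frac{1}{2\pi}\int_0^{2\pi}|re^{i\theta}+1|^{-\beta}d\theta\asymp(1-r)^{1-\beta}$ for $\beta>1$, and then pick $\phi(r)\asymp(1-r)^{\alpha+\beta-1+\epsilon}$. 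Swapping sum and integral gives $\sum_j(1-|w_j|)^{\alpha+\beta+1+\epsilon}\lesssim K$, which is strictly weaker than the claimed bound (the claimed bound says zeros well away from $w=-1$ obey the much stronger exponent $\alpha+1+\epsilon$, and only those near $w=-1$ are allowed the extra $\beta$). The whole content of the Borichev--Golinskii--Kupin refinement is precisely this angular localization, and it cannot be extracted from the radial Jensen average.

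The mechanism in \cite{BGK} is different: one performs a Whitney/dyadic decomposition of a neighborhood of the singular boundary point $w=-1$ into pieces of size $\sim 2^{-k}$, applies a rescaled radial Jensen-type bound on each piece (where the angular factor $|w+1|^{-\beta}$ is essentially constant, $\sim 2^{k\beta}$), and then sums over $k$. This is where the exponent $(\beta-1+\epsilon)_+$ and the threshold at $\beta=1$ arise; the $\epsilon$-loss in both exponents is the price for making the dyadic sum converge. Your proposed reformulation in terms of a Riesz/distributional-Laplacian pairing $2\pi\sum_j h(w_j)=\int_{\mathbb D}\log|g|\,\Delta h\,dA+\text{(boundary terms)}$ is conceptually closer to what one needs, but a product weight $h(w)=(1-|w|)^{\alpha+1+\epsilon}|w+1|^{(\beta-1+\epsilon)_+}$ has sign-changing Laplacian and nontrivial boundary contributions, and turning this into a one-sided estimate requires essentially the same localization as the dyadic argument; it is not a routine integration-by-parts check. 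Finally, the claim for $\alpha=0$ that ``the classical Blaschke bound holds on each sector away from $w=-1$'' does not by itself yield the quantitative estimate $\sum(1-|w_j|)|w_j+1|^{(\beta-1+\epsilon)_+}\leq CK$: Blaschke's theorem is qualitative and still requires the function to be of bounded type, which the hypothesis does not directly grant. So both halves of the proposition need the dyadic machinery, not merely an integrability check.
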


\begin{proof}[Proof of Theorem \ref{af}]
After replacing $z$ by $M^{-1/(\rho+\sigma)}z$ we may assume that $M=1$.

\emph{Step 1.} We first assume that $\rho>0$ and show that there is a constant $A> 1$ (depending only on $p$) such that
\begin{equation}
\label{eq:afmain}
\sum \frac{\delta(z_j)^{p\rho+1+\epsilon} |z_j|^{\frac{(p\rho+2p\sigma-1+\epsilon)_+-(p\rho+1+\epsilon)}{2}}}{(|z_j|+a)^{(p\rho+1+\epsilon)+\frac{(p\rho+2p\sigma-1+\epsilon)_+}{2}}} \leq C a^{-(\rho+\sigma)-\frac{p\rho+1+\epsilon}{2}} 
\quad\text{for all}\ a \geq A \,.
\end{equation}
Here $\epsilon>0$ is arbitrary and $C$ depends on $\epsilon$, $p$, $\rho$ and $\sigma$. Let
$$
h(z) := \det {}_{\lceil p \rceil} (1+K(z)) \,,
$$
where $\lceil p \rceil$ is the smallest integer $\geq p$. According to Lemma \ref{zeroes} this is an analytic function of $z\in\C\setminus[0,\infty)$ and its zeroes, including multiplicities, coincide with the $z_j$. For $a>1$ we consider the conformal map $\psi:\mathbb D\to\C\setminus [0,\infty)$,
$$
\psi(w)= -a \left( \frac{1+w}{1-w} \right)^2
$$
and define the following function on the unit disk, $g:\mathbb D\to\C$,
$$
g(w) := \frac{h(\psi(w))}{h(-a)} \,.
$$
Clearly, this is an analytic function on $\mathbb D$ with $g(0)=1$ and, by the properties of $h$, its zeroes, counting multiplicities, coincide with the $\psi^{-1}(z_j)$. (Here we also use the fact that $h(-a)\neq 0$ since $\|K(-a)\|\leq \|K(-a)\|_p \leq a^{-\rho-\sigma}<1$.) Let us show that $g$ satisfies a bound as required for Proposition \ref{bgk}. In fact, we claim that
\begin{equation}
\label{eq:afproof}
\ln|g(w)| \leq C a^{-(\rho+\sigma)} (1-|w|)^{-p\rho} |1+w|^{-p\rho-2p\sigma}
\end{equation}
with a constant $C$ depending on $p$, $\rho$ and $\sigma$.

In fact, by inequalities about regularized determinants \cite[Thm. 9.2]{Si} we have, with constants $C$ depending only on $p$,
$$
\ln|h(z)| \leq C \|K(z)\|_p^p \leq C \delta(z)^{-p\rho} |z|^{-p\sigma}
$$
and
$$
\left||h(-a)|-1\right| \leq |h(-a)-1| \leq \|K(-a)\|_p \, e^{C(\|K(-a)\|_p^p+1)} \leq a^{-(\rho+\sigma)} \, e^{C(a^{-p(\rho+\sigma)}+1)} \,.
$$
(Strictly speaking, these inequalities are contained in \cite[Thm. 9.2]{Si} only for $p\in\N$, but the proof there works for any $p>0$.) The latter inequality implies that there are constants $C>0$ and $A> 1$, again depending only on $p$, such that
$$
\ln |h(-a)| \geq - C a^{-(\rho+\sigma)} 
\qquad\text{for all}\ a\geq A \,.
$$
This implies that
$$
\ln|g(w)|=\ln|h(\psi(w))| - \ln|h(-a)| \leq C \left(\delta(\psi(w))^{-p\rho} |\psi(w)|^{-p\sigma} + a^{-(\rho+\sigma)}\right).
$$
We now compute $\psi'(w) = -4a(1+w)/(1-w)^3$ and obtain, by Koebe's distortion theorem (see \cite[p. 9]{Po} and also \cite[Thm. 4.3.4]{DeHaKa}),
$$
\delta(\psi(w)) \geq \frac14 |\psi'(w)| (1-|w|) = \frac{a\,|1+w|\,(1-|w|)}{|1-w|^3}\,.
$$
This, together with the previous bound, yields
\begin{align*}
\ln|g(w)| & \leq C \left( a^{-p(\rho+\sigma)} (1-|w|)^{-p\rho} |1+w|^{-p\rho-2p\sigma} |1-w|^{3p\rho+2p\sigma} + a^{-(\rho+\sigma)}\right) \\
& \leq C \left( 8^{3p\rho+2p\sigma} a^{-p(\rho+\sigma)} (1-|w|)^{-p\rho} |1+w|^{-p\rho-2p\sigma} + a^{-(\rho+\sigma)} \right).
\end{align*}
It is easy to deduce \eqref{eq:afproof}. In fact, for the first term on the right side we use $a>1$, $p\geq 1$ and $\rho+\sigma\geq 0$ to bound $a^{-p(\rho+\sigma)}\leq a^{-(\rho+\sigma)}$. For the second term we use $p(\rho+\sigma)\geq 0$ and obtain $1 \leq 2^{p(\rho+\sigma)} |1+w|^{-p(\rho+\sigma)}$ and
$$
1 \leq 2^{p(\rho+\sigma)} |1+w|^{-p(\rho+\sigma)} \leq 2^{p(\rho+\sigma)} (1-|w|)^{-p\rho} |1+w|^{-p\sigma} \,,
$$
so $1 \leq 4^{p(\rho+\sigma)} (1-|w|)^{-p\rho} |1+w|^{-p\rho-2p\sigma}$. This proves \eqref{eq:afproof}.

Thus, we are in the situation of Proposition \ref{bgk} and we infer that for any $\epsilon>0$, with a constant depending only on $p$, $\rho$, $\sigma$ and $\epsilon$,
\begin{equation}
\label{eq:afmainproof}
\sum \left(1-\left|\psi^{-1}(z_j)\right|\right)^{p\rho+1+\epsilon} \left|1+ \psi^{-1}(z_j)\right|^{(p\rho+2p\sigma - 1 + \epsilon)_+} \leq C a^{-(\rho+\sigma)} \,.
\end{equation}
Since $\psi^{-1}(z) = (\sqrt{-z} - \sqrt{a})/(\sqrt{-z} + \sqrt{a})$, we have
$$
\left|1+\psi^{-1}(z)\right| = \frac{2\sqrt{|z|}}{\left|\sqrt{-z} + \sqrt{a}\right|} \geq 
\frac{\sqrt{2} \,\sqrt{|z|}}{\sqrt{|z| + a}} \,,
$$
and, again by Koebe's distortion theorem,
$$
1-\left|\psi^{-1}(z)\right| \geq \frac{\delta(z) \left|(\psi^{-1})'(z)\right|}{2} = \frac{\delta(z)\sqrt{a}}{2\sqrt{|z|}\left|\sqrt{-z}+\sqrt{a}\right|^2} 
\geq \frac{\delta(z)\sqrt{a}}{4\sqrt{|z|}(|z|+a)} \,,
$$
where we used $(\psi^{-1})'(z) = -\sqrt{a}/(\sqrt{-z}(\sqrt{-z}+\sqrt{a})^2)$. Inserting these bounds into \eqref{eq:afmainproof} we obtain \eqref{eq:afmain}.

\emph{Step 2.} Still assuming $\rho>0$ we now use \eqref{eq:afmain} to prove the bounds in the theorem.

First, in order to control the sum of terms with $|z_j|\leq 1$ we choose $a=A$ and bound $|z_j|+A \leq 1+A$. This yields the first bound in the theorem.

Next, in order to control the sum of terms with $|z_j|\geq \nu$ and $\nu\geq 1$ we multiply \eqref{eq:afmain} by $a^{(\rho+\sigma)+\frac{p\rho+1+\epsilon}{2}-1-\epsilon'}$ and integrate with respect to $a$ from $\nu A$ to $\infty$. (A similar idea appears already in \cite{DeHaKa0}.) The integral on the right side is finite and equals $(C/\epsilon')(\nu A)^{-\epsilon'}$. Therefore the integral of each term of the left side is finite as well (note that this implies $\rho+\sigma - (p\rho+1+\epsilon)/2-(p\rho+2p\sigma-1+\epsilon)_+/2 - \epsilon'<0$). We observe for $|z|\geq \nu$,
\begin{align*}
& \int_{\nu A}^\infty \frac{a^{(\rho+\sigma)+\frac{p\rho+1+\epsilon}{2}-1-\epsilon'}}{(|z|+a)^{(p\rho+1+\epsilon)+\frac{(p\rho+2p\sigma-1+\epsilon)_+}{2}}} \,da \\
& \quad = |z|^{(\rho+\sigma)-\frac{p\rho+1+\epsilon}{2}-\frac{(p\rho+2p\sigma-1+\epsilon)_+}{2}-\epsilon'} \int_{\nu A/|z|}^\infty \frac{r^{(\rho+\sigma)+\frac{p\rho+1+\epsilon}{2}-1-\epsilon'}}{(1+r)^{(p\rho+1+\epsilon)+\frac{(p\rho+2p\sigma-1+\epsilon)_+}{2}}} \, dr \\
& \quad \geq |z|^{(\rho+\sigma)-\frac{p\rho+1+\epsilon}{2}-\frac{(p\rho+2p\sigma-1+\epsilon)_+}{2}-\epsilon'} \int_{A}^\infty \frac{r^{(\rho+\sigma)+\frac{p\rho+1+\epsilon}{2}-1-\epsilon'}}{(1+r)^{(p\rho+1+\epsilon)+\frac{(p\rho+2p\sigma-1+\epsilon)_+}{2}}} \, dr \,.
\end{align*}
Inserting this into the integrated version of \eqref{eq:afmain} we obtain
$$
\sum_{|z_j|\geq \nu} \delta(z_j)^{p\rho+1+\epsilon} |z_j|^{(\rho+\sigma)-(p\rho+1+\epsilon)-\epsilon'} \leq C \nu^{-\epsilon'} \,,
$$
which is the second bound in the theorem.

\emph{Step 3.} Finally, we briefly comment on the case $\rho=0$. (The proof in this case is essentially contained in \cite{FrSa}.) The proof is similar, except we can apply Proposition \ref{bgk} with $\alpha=0$ and therefore the term $p\rho+1+\epsilon$ can be replaced by $1$. Thus the main inequality \eqref{eq:afmain} becomes
\begin{equation}
\label{eq:afmain0}
\sum \frac{\delta(z_j) |z_j|^{\frac{(2p\sigma-1+\epsilon)_+-1}{2}}}{(|z_j|+a)^{1+\frac{(2p\sigma-1+\epsilon)_+}{2}}} \leq C a^{-\sigma-\frac{1}{2}} \,,
\end{equation}
which immediately yields the bound in the theorem. Moreover, the fact that $|z_j|\leq 1$ follows from the fact that $1\leq \|K(z_j)\|\leq\|K(z_j)\|_p \leq |z_j|^{-\sigma}$.  This completes the proof of the theorem.
\end{proof}


\section{The Birman--Schwinger principle and the proof of Theorem \ref{main2}}\label{sec:bs}

In this section we prove Theorem \ref{main2} about the eigenvalues of Schr\"odinger operators by relating them to the eigenvalues of an analytic family of compact operators and by invoking Theorem \ref{af}. The link between Schr\"odinger eigenvalues and eigenvalues of compact operators is provided by the Birman--Schwinger principle. This principle is true in the non-selfadjoint case as well, and we begin by reviewing it. We present this in a more general set-up since it might be useful in other contexts as well.

Let $H_0$ be a self-adjoint, non-negative operator in a Hilbert space $\mathcal H$ and assume that $G_0$ and $G$ are operators from $\mathcal H$ to some Hilbert space $\mathcal G$ such that $\dom G_0\supset\dom H_0^{1/2}$, $\dom G\supset\dom H_0^{1/2}$ and
\begin{equation}
\label{eq:katoass}
G_0 (H_0 + 1)^{-1/2} \,,
\qquad
G (H_0 + 1)^{-1/2}
\qquad
\text{are compact}.
\end{equation}
As we shall recall in the appendix (Lemma \ref{defop}) under these assumptions the quadratic form
$$
\|H_0^{1/2} u\|^2 + (Gu,G_0u)
$$
with domain $\dom H_0^{1/2}$ is closed and sectorial and generates an $m$-sectorial operator $H$. (It is natural to use the notation
$$
H=H_0 + G^* G_0 \,,
$$
but we emphasize that we do not assume that $G$ is closable, so $G^*$ need not be densely defined. Not requiring $G$ to be closable is important in some applications, see the remark in Appendix \ref{sec:defop}.)

We recall in the appendix (Lemma \ref{spec}) that under assumption \eqref{eq:katoass} the spectrum of $H$ outside of $\sigma_\ess(H_0)$ is discrete and consists of eigenvalues of finite algebraic multiplicities. Our goal now is to characterize these eigenvalues and their multiplicities in terms of the Birman--Schwinger operator
\begin{equation}
\label{eq:bsdef}
K(z) := G_0 (H_0-z)^{-1} G^* \,,
\qquad z\in\rho(H_0) \,.
\end{equation}
Under assumption \eqref{eq:katoass} this is a compact operator in $\mathcal G$. (In order to avoid the problem mentioned above concerning the adjoint of $G$, we strictly speaking define $K(z)$ as
$$
K(z) = \left( G_0 (H_0 + 1)^{-1/2} \right) \left( (H_0+1)(H_0-z)^{-1} \right) \left( G (H_0 + 1)^{-1/2} \right)^* \,.
$$
Similar modifications are tacitly assumed in what follows.)

\begin{proposition}\label{bs}
Assume \eqref{eq:katoass} and let $z\in\rho(H_0)$. Then $z$ is an eigenvalue of $H$ iff it is an eigenvalue of finite type of the analytic family $1+K$. In this case the geometric (resp. algebraic) multiplicity of $z$ as an eigenvalue of $H$ coincides with the geometric (resp. algebraic) multiplicity of $z$ as an eigenvalue of $1+K$.
\end{proposition}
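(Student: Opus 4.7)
The plan is to set up the standard Birman--Schwinger correspondence in two stages: first an explicit bijection of kernels (which simultaneously yields the geometric multiplicity statement and the `eigenvalue iff eigenvalue of finite type' claim), then a comparison of algebraic multiplicities via the second resolvent identity and a logarithmic residue.

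For the kernel bijection I would define $\Phi:\ker(H-z)\to\ker(1+K(z))$ by $\Phi(\psi)=G_0\psi$ and its candidate inverse $\Psi:\ker(1+K(z))\to\ker(H-z)$ by $\Psi(\phi)=-(H_0-z)^{-1}G^*\phi$ (where $(H_0-z)^{-1}G^*$ is interpreted as the bounded operator $[(H_0+1)(H_0-z)^{-1}]\cdot[G(H_0+1)^{-1/2}]^*(H_0+1)^{-1/2}$, which is well defined under \eqref{eq:katoass} even when $G$ is not closable). Unwinding the form definition of $H$ from Lemma~\ref{defop}, the equation $H\psi=z\psi$ is equivalent to the identity $\psi=-(H_0-z)^{-1}G^*G_0\psi$ in $\mathcal H$; applying $G_0$ to both sides gives $(1+K(z))G_0\psi=0$, while substituting this identity directly shows $\Psi\circ\Phi=\mathrm{id}$. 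Conversely, if $(1+K(z))\phi=0$ and $\psi:=\Psi(\phi)$, then $G_0\psi=-K(z)\phi=\phi$ and $\psi$ satisfies the form identity defining $\ker(H-z)$; so $\Phi\circ\Psi=\mathrm{id}$. This yields $\dim\ker(H-z)=\dim\ker(1+K(z))$. To pass from this to the finite-type statement, use that $K(z)$ is compact (so $1+K(z)$ is Fredholm of index zero for every $z\in\rho(H_0)$) and that, by Lemma~\ref{spec}, $z_0\in\rho(H)$ in a punctured neighborhood of any eigenvalue; the kernel isomorphism then forces $\ker(1+K(z))=\{0\}$ on that neighborhood, and index-zero Fredholm with trivial kernel is invertible.

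For the algebraic multiplicities I would integrate the second resolvent identity
\[
(H-z)^{-1} = (H_0-z)^{-1} - (H_0-z)^{-1}G^*(1+K(z))^{-1}G_0(H_0-z)^{-1}
\]
on a small positively oriented contour $\gamma$ around $z_0$ in $\rho(H_0)\setminus\{z_0\}$. The left side produces the Riesz projection $P(H;z_0)$ whose rank is the algebraic multiplicity of $z_0$ for $H$; the $(H_0-z)^{-1}$ term is analytic inside $\gamma$ and drops out. By \eqref{eq:factorization} applied to $1+K$, the principal part of $(1+K(z))^{-1}$ at $z_0$ is finite rank, so the remaining contour integral is a finite-rank, hence trace-class, operator. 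Taking traces and using cyclicity,
\[
\operatorname{tr} P(H;z_0) = \frac{1}{2\pi i}\operatorname{tr}\oint_\gamma (1+K(z))^{-1}G_0(H_0-z)^{-2}G^*\,dz = \frac{1}{2\pi i}\operatorname{tr}\oint_\gamma (1+K(z))^{-1}K'(z)\,dz,
\]
and the Gohberg--Sigal logarithmic residue theorem (Thm.~XI.9.1 in \cite{GGK}, or direct evaluation from \eqref{eq:factorization}) identifies the right-hand side with the algebraic multiplicity of $z_0$ as eigenvalue of the family $1+K$. Combined with the previous step, this completes the proof.

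The main obstacle I anticipate is not the high-level strategy but the careful bookkeeping in two places: (i) interpreting $G^*$ throughout the argument when $G$ is merely assumed to satisfy \eqref{eq:katoass} and need not be closable, which forces one to work consistently with the bounded operator $(G(H_0+1)^{-1/2})^*$ and its compositions; and (ii) justifying the trace-cyclicity manipulation for the contour integral --- here one cannot appeal to trace-class membership of $K'(z)$ itself, but must instead exploit the finite-rank principal part of $(1+K(z))^{-1}$ at $z_0$ to reduce the integrand modulo an analytic term to a finite-rank operator before applying cyclicity.
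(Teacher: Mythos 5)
Your proposal follows the paper's proof essentially step for step: the same kernel bijection $\psi\mapsto G_0\psi$ and $\phi\mapsto -(H_0-z)^{-1}G^*\phi$ (with the same careful interpretation of $G^*$ via the bounded operator $(G(H_0+1)^{-1/2})^*$), the same use of compactness and Lemma~\ref{spec} to upgrade "eigenvalue" to "eigenvalue of finite type," and the same contour-integral/trace argument reducing $\operatorname{rank} P(H;z_0)$ to $\tfrac{1}{2\pi i}\tr\oint (1+K)'(1+K)^{-1}$ and invoking the Gohberg--Rouch\'e logarithmic residue theorem \cite[Thm.~XI.9.1]{GGK}. The only cosmetic difference is that the paper extracts principal parts $\Xi$ before taking traces (citing the commutativity lemma \cite[Lem.~XI.9.3]{GGK}), whereas you integrate first and justify cyclicity by observing the finite-rank principal part of $(1+K(z))^{-1}$; these are interchangeable ways of handling the same technical point.
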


We emphasize that by `algebraic multiplicity of $z$ as an eigenvalue of $1+K$' we refer to the definition in Section \ref{sec:af}. This does \emph{not} necessarily coincide with the algebraic multiplicity of the eigenvalue $-1$ of $K(z)$.

A similar proposition is proved in \cite{LaSu} (see also \cite{GeHoNi}) under weaker assumptions on $H_0$ but stronger assumptions on $G$ and $G_0$. Their argument also works in our situation, but for the sake of completeness we provide details.

\begin{proof}
The equivalence and the equality of geometric multiplicities are standard results (see, e.g., \cite{GLMZ} and references therein). In fact, as is easily checked, the operators
$$
\dom H_0^{1/2}\ni\psi\mapsto G_0\psi \in\mathcal G \,,
\qquad
\mathcal G\ni \phi\mapsto -(H_0-z)^{-1} G^*\phi \in\dom H_0^{1/2}
$$
are well-defined, map $\ker(H_0-z)$ into $\ker(1+K(z))$ and $\ker(1+K(z))$ into $\ker(H_0-z)$, respectively, and are inverse to each other.

We now show the equality of the algebraic multiplicities. First, note that if $0$ is an eigenvalue of $1+K(z)$, then it is an eigenvalue of finite type. In fact, an operator that differs from the identity by a compact operator is Fredholm and we know from Lemma \ref{spec} that eigenvalues of $H$ outside of $\sigma_\ess(H_0)$ are isolated, so $1+K(\zeta)$ is invertible for $\zeta$ close to, but different from $z$. (Alternatively, one can deduce this from the analytic Fredholm alternative \cite[Cor.$\!$ 8.4]{GGK} applied to the family $1+K$, which is invertible in $\{\re \zeta<-M\}$ for some $M\geq 0$ since $H$ is $m$-sectorial.)

By Lemma \ref{spec} and the openness of $\rho(H_0)$ there is a closed disk $D$ centered at $z$ with $D\cap\sigma(H)=\{z\}$ and $D\cap\sigma(H_0)=\emptyset$. Let $\Gamma$ be the contour $\partial D$ oriented counter-clockwise. Then the algebraic multiplicity $m$ of the eigenvalue $z$ of $H$ satisfies
\begin{align*}
m = -\frac{1}{2\pi i} \tr \int_\Gamma (H-\zeta)^{-1} \,d\zeta = -\frac{1}{2\pi i} \int_\Gamma \tr \Xi[(H-\zeta)^{-1}] \,d\zeta \,,
\end{align*}
where $\Xi(W)(\zeta)$ denotes the principal part of a meromorphic operator family $W$ at $\zeta$. Since $\zeta\in\rho(H_0)\cap\rho(H)$ for $\zeta\in\Gamma$, one obtains from the resolvent identity \eqref{eq:resid}
\begin{align*}
\tr \Xi\left[(H-\zeta)^{-1}\right] & = \tr \Xi\left[(H-\zeta)^{-1}-(H_0-\zeta)^{-1} \right] \\
& = -\tr\Xi\left[ (H_0-\zeta)^{-1} G^* (1+K(\zeta))^{-1} G_0 (H_0-\zeta)^{-1} \right] \\
& = -\tr\Xi\left[ G_0 (H_0-\zeta)^{-2} G^* (1+K(\zeta))^{-1} \right] \\
& = - \tr\Xi\left[(1+K)'(1+K)^{-1}\right](\zeta) \,.
\end{align*}
In the next to last equality we used the commutativity lemma \cite[Lem.$\!$ XI.9.3]{GGK}. Thus, we obtain
\begin{align*}
m & = \frac{1}{2\pi i} \int_\Gamma \tr\Xi\left[(1+K)'(1+K)^{-1}\right](\zeta) \,d\zeta \\
& = \frac{1}{2\pi i} \tr \int_\Gamma (1+K(\zeta))'(1+K(\zeta))^{-1} \,d\zeta \,.
\end{align*}
According to the Gohberg--Rouch\'e theorem \cite[Thm.$\!$ XI.9.1]{GGK} the right side coincides with the algebraic multiplicity of the eigenvalue $z$ of $1+K$, as claimed.
\end{proof}

We now apply this approach to the Schr\"odinger operator $H=-\Delta+V$ by choosing
$$
H_0 = -\Delta \,,
\qquad
G = \sqrt{|V|} \,,
\qquad
G_0 = \sqrt{V}
$$
and $\mathcal H=\mathcal G=L^2(\R^d)$. Assumption \eqref{eq:katoass} follows from the following well-known lemma with the choices $W=\sqrt{|V|}$ and $W=\sqrt{V}$, whose proof we include for the sake of completeness.

\begin{lemma}\label{bssobolev}
Let $W\in L^{2\gamma+d}(\R^d)$, where $\gamma\geq 1/2$ if $d=1$, $\gamma>0$ if $d=2$ and $\gamma\geq 0$ if $d\geq 3$. Then for any $a>0$ the operator $W (-\Delta+a)^{-1/2}$ is compact and
\begin{equation}
\label{eq:bssobolev}
\left\| W(-\Delta+a)^{-1/2} \right\| \leq C_{\gamma,d}\, a^{-\gamma/(2\gamma+d)} \left( \int_{\R^d} |W|^{2\gamma+d} \,dx \right)^{1/(2\gamma+d)}
\end{equation}
with a constant $C_{\gamma,d}$ independent of $W$. If $\gamma=0$ and $d\geq 3$, this holds for $a=0$ as well.
\end{lemma}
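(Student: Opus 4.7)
The plan is to split into two cases according to whether $p:=2\gamma+d$ strictly exceeds $d$ or equals it, since the methods differ at the endpoint.

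In the main case (namely $\gamma>0$, and also $d=1$ with $\gamma\geq 1/2$) one has $p>d$ and $p\geq 2$. I would invoke the Kato--Seiler--Simon inequality, which asserts that $\|f(X)g(-i\nabla)\|_{\mathfrak S^p}\leq (2\pi)^{-d/p}\|f\|_{L^p}\|g\|_{L^p}$ for $p\geq 2$; see \cite{Si}. Applied with $f=W$ and $g(k)=(k^2+a)^{-1/2}$ it gives
\begin{equation*}
\bigl\| W(-\Delta+a)^{-1/2}\bigr\|_{\mathfrak S^{2\gamma+d}} \leq (2\pi)^{-d/p}\|W\|_{p}\,\bigl\|(k^2+a)^{-1/2}\bigr\|_{L^p(\R^d)} .
\end{equation*}
The condition $p>d$ ensures the last factor is finite, and the scaling $k=\sqrt{a}\,\eta$ yields
\begin{equation*}
\bigl\|(k^2+a)^{-1/2}\bigr\|_{L^p}^p = a^{(d-p)/2}\int_{\R^d}(\eta^2+1)^{-p/2}\,d\eta ,
\end{equation*}
so this norm equals $C_{\gamma,d}\, a^{-\gamma/(2\gamma+d)}$, exactly matching \eqref{eq:bssobolev}. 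Since every $\mathfrak S^{2\gamma+d}$ operator is compact and its operator norm is bounded by its Schatten norm, both assertions of the lemma follow in this case.

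The remaining endpoint case is $\gamma=0$ and $d\geq 3$, where $p=d$ and the symbol $(k^2+a)^{-1/2}$ just fails to lie in $L^d(\R^d)$, so Kato--Seiler--Simon is unavailable. Here I would use the Sobolev embedding $\|(-\Delta+a)^{-1/2}u\|_{L^{2d/(d-2)}}\leq C_d\|u\|_{L^2}$, valid uniformly in $a\geq 0$, followed by H\"older's inequality with exponents $d$ and $2d/(d-2)$ (which satisfy $1/d+(d-2)/(2d)=1/2$):
\begin{equation*}
\|W(-\Delta+a)^{-1/2}u\|_{L^2}\leq \|W\|_{L^d}\,\|(-\Delta+a)^{-1/2}u\|_{L^{2d/(d-2)}}\leq C_d \|W\|_{L^d}\|u\|_{L^2}.
\end{equation*}
This gives the required bound (with $a^{-0/d}=1$) and also handles $a=0$.

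What remains is compactness in this endpoint case, since we no longer have a trace-ideal bound. Here I would approximate $W$ in $L^d(\R^d)$ by a sequence $W_n\in L^\infty\cap C_c$. For each $W_n$ the operator $(-\Delta+a)^{-1/2}$ maps $L^2(\R^d)$ continuously into $H^1(\R^d)$, and multiplication by $W_n$ then lands in $L^2(K)$ for a fixed compact $K\supset\supp W_n$; the Rellich--Kondrachov theorem forces this composition to be compact. The previously established operator-norm bound applied to $W-W_n$ shows $W_n(-\Delta+a)^{-1/2}\to W(-\Delta+a)^{-1/2}$ in operator norm, so the limit is compact as well. The main point requiring care is precisely this last step; the two cases cannot be unified because the Schatten-class approach is structurally unavailable at the endpoint $p=d$.
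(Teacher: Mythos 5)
Your proof is correct, and the overall strategy — establish a quantitative operator bound, then obtain compactness by approximation together with Rellich — matches the paper. But the route to the operator bound differs in the non-endpoint case. The paper derives \eqref{eq:bssobolev} for \emph{all} permitted $(\gamma,d)$ from the Sobolev inequalities (\cite[Thms.\ 8.3, 8.5]{LiLo}) combined with H\"older, proving the bound for $a=1$ and then scaling; compactness is then handled in one stroke for all cases by writing $W = W_1 + W_2$ with $W_1$ bounded of finite-measure support and $\|W_2\|_{2\gamma+d}$ small, invoking Rellich for $W_1$ and the established bound for $W_2$. You instead invoke Kato--Seiler--Simon when $\gamma>0$, which gives the stronger conclusion $W(-\Delta+a)^{-1/2}\in\mathfrak S^{2\gamma+d}$ and delivers compactness for free, but this forces you to carve out the endpoint $\gamma=0$, $d\ge3$ (where $(k^2+a)^{-1/2}\notin L^d$) and rerun the Sobolev--H\"older--Rellich argument there. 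The paper's uniform Sobolev treatment is more elementary and avoids the case split entirely; your KSS branch buys a genuine Schatten bound that the lemma does not ask for but which is in the same spirit as the trace-ideal estimates used later (e.g.\ in the proof of Theorem \ref{main3}). Both are sound; the paper's is shorter and more uniform, yours is slightly more informative where it applies.

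One small caveat worth flagging: your Rellich argument for the endpoint case passes through $(-\Delta+a)^{-1/2}:L^2\to H^1$, which requires $a>0$. That is consistent with the lemma, whose compactness claim is stated only for $a>0$; but if one also wants compactness at $a=0$ when $\gamma=0$, $d\ge3$, the $H^1$ route fails and one must argue differently (for instance via a Cwikel weak-Schatten bound). The paper's approximation argument has the same limitation at $a=0$, so this is not a defect relative to the paper — just something to keep in mind if you later need the $a=0$ compactness statement.
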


\begin{proof}
Inequality \eqref{eq:bssobolev} with $a=1$ follows from Sobolev inequalities (see, e.g., \cite[Thms.$\!$ 8.3 and 8.5]{LiLo}). The case of arbitrary $a>0$ follows by scaling. The fact that $W(-\Delta+a)^{-1/2}$ is compact if $W$ is bounded and has support in a set of finite measure follows from Rellich's lemma (see, e.g., \cite[Thm.$\!$ 8.6]{LiLo}. The general case can be deduced from this, inequality \eqref{eq:bssobolev} and the fact that the limit of compact operators is compact. (Indeed, for given $\epsilon>0$ write $W=W_1+W_2$ with $W_1$ bounded and of support of finite measure and with $\|W_2\|_{2\gamma+d}\leq \epsilon$. Then, by \eqref{eq:bssobolev}, $W(-\Delta+a)^{-1/2}$ differs from the compact operator $W_1(-\Delta+a)^{-1/2}$ by an operator with norm bounded by $C_{\gamma,d} a^{-\gamma/(2\gamma+d)} \epsilon$.)
\end{proof}

The Birman--Schwinger operator in the Schr\"odinger case takes the form
\begin{equation}
\label{eq:bsso}
K(z) = \sqrt V (-\Delta-z)^{-1} \sqrt{|V|} \,,
\qquad z\in\C\setminus[0,\infty) \,.
\end{equation}

After these preliminaries we can give the

\begin{proof}[Proof of Theorem \ref{main2}]
According to Proposition \ref{res} the Birman--Schwinger operator \eqref{eq:bsso} satisfies
$$
\|K(z)\|_p \leq M \delta(z)^{-\rho} |z|^{-\sigma}
$$
with
$$
p=2(\gamma+d/2) \,,\quad \rho = \frac{\gamma-1/2}{\gamma+d/2} \,,\quad \sigma = \frac{1}{2(\gamma+d/2)} \,,\quad 
M = C \|V\|_{\gamma+d/2} \,. 
$$
We note that
$$
p\rho + 1 + \epsilon = 2\gamma+\epsilon \,,\quad
p\rho+2p\sigma-1+\epsilon = 2\gamma + \epsilon \,,\quad
\rho+\sigma = \frac{\gamma}{\gamma+d/2} \,.
$$
According to Proposition \ref{bs}, the eigenvalues of $-\Delta+V$ in $\C\setminus[0,\infty)$ coincide with the eigenvalues of $1+K$ of finite type in $\C\setminus[0,\infty)$, counting algebraic multiplicities. Thus, the claimed bounds follow from Theorem \ref{af} (with $\nu=\mu^{1/\gamma}$).
\end{proof}


\section{Proof of Theorem \ref{main3}}

The proof of Theorem \ref{main3} is of a different nature than the proofs of Theorem \ref{main1} and \ref{main2} and, in particular, does not rely on the resolvent bounds of Proposition \ref{res}. It rather uses a matrix inequality due to Hansmann \cite{Ha}, which extends a classical result of Kato to non-self-adjoint perturbations, together with a standard resolvent bound coming from the Kato--Seiler--Simon inequality.

\begin{proof}[Proof of Theorem \ref{main3}]
\emph{Step 1.} We shall show that there are constants $C_{\gamma,d}$ and $L_{\gamma,d}$ such that
\begin{align}
\label{eq:main3proofkey}
\sum_j \frac{\delta(E_j)^{\gamma+d/2}}{(|E_j|+a)^{2\gamma+d}} \leq & L_{\gamma,d} \ a^{-2\gamma-d/2} \int_{\R^d} |V|^{\gamma+d/2} \,dx \notag \\
& \text{for all}\ a\geq C_{\gamma,d} \left( \int_{\R^d} |V|^{\gamma+d/2}\,dx \right)^{1/\gamma} \,.
\end{align}
A simple computation \cite[(3.41)]{Ha0}) shows that for all $a>0$ and $E\in\C$,
$$
\dist( (E+a)^{-1},[0,a^{-1}]) \geq \frac{\delta(E)}{8|E+a| (|E|+a)} \,.
$$
Since $(E_j+a)^{-1}$ are the eigenvalues, counting algebraic multiplicities, of $(-\Delta+V+a)^{-1}$ and since $[0,a^{-1}]$ is the spectrum of $(-\Delta+a)^{-1}$, we can use Hansmann's bound \cite{Ha} to conclude that
\begin{align}
\label{eq:ha}
\frac 1{8^{\gamma+d/2}} \sum \frac{\delta(E_j)^{\gamma+d/2}}{(|E_j| + a)^{2\gamma+d}} 
& \leq \sum \dist( (E+a)^{-1},[0,a^{-1}])^{\gamma+d/2} \notag \\
& \leq \left\|(-\Delta+V+a)^{-1} - (-\Delta +a)^{-1} \right\|_{\gamma+d/2}^{\gamma+d/2} \,.
\end{align}
Thus, we need an upper bound on the Schatten norm on the right side. According to the resolvent identity \eqref{eq:resid},
\begin{align*}
& (-\Delta+V+a)^{-1} - (-\Delta +a)^{-1} \\
& \qquad = -(\Delta+a)^{-1} \sqrt{|V|} \left( 1 + \sqrt{V} (-\Delta+a)^{-1} \sqrt{|V|} \right)^{-1} \sqrt{V} (-\Delta+a)^{-1} \,.
\end{align*}
From \eqref{eq:bssobolev} we obtain
$$
\left\| \sqrt{V}(-\Delta+a)^{-1} \sqrt{|V|} \right\| \leq C_{\gamma,d}' \, a^{-\gamma/(\gamma+d/2)} \left( \int_{\R^d} |V|^{\gamma+d/2} \,dx \right)^{1/(\gamma+d/2)} \,.
$$
Thus, if $a\geq C_{\gamma,d} \left( \int |V|^{\gamma+d/2}\,dx\right)^{1/\gamma}$ with $C_{\gamma,d} := (2C'_{\gamma,d})^{(\gamma+d/2)/\gamma}$, then $\|\sqrt{V} (-\Delta+a)^{-1} \sqrt{|V|}\|\leq 1/2$ and therefore
$$
\left\| \left( 1 + \sqrt{V} (-\Delta+a)^{-1} \sqrt{|V|} \right)^{-1} \right\| \leq 2 \,.
$$
Thus, by the H\"older inequality for trace ideals and the Kato--Seiler--Simon inequality \cite[Thm. 4.1]{Si},
\begin{align*}
& \left\|(-\Delta+V+a)^{-1} - (-\Delta +a)^{-1} \right\|_{\gamma+d/2}^{\gamma+d/2} \leq 2^{\gamma+d/2} \left\|(-\Delta+a)^{-1} \sqrt{|V|}\right\|_{2(\gamma+d/2)}^{2(\gamma+d/2)} \\
& \qquad \leq 2^{\gamma+d/2} (2\pi)^{-d} \int_{\R^d} \frac{dp}{(p^2+a)^{2(\gamma+d/2)}} \int_{\R^d} |V|^{\gamma+d/2}\,dx \\
& \qquad \leq 2^{\gamma+d/2} (2\pi)^{-d} a^{-2\gamma-d/2} \int_{\R^d} \frac{dp}{(p^2+1)^{2(\gamma+d/2)}} \int_{\R^d} |V|^{\gamma+d/2}\,dx \,.
\end{align*}
Inserting this into \eqref{eq:ha} we obtain \eqref{eq:main3proofkey}.

\emph{Step 2.} We now deduce the theorem from \eqref{eq:main3proofkey} by similar arguments as \cite{DeHaKa0} and in the proof of Proposition \ref{af}. We abbreviate $A:=C_{\gamma,d} \left(\int |V|^{\gamma+d/2}\right)^{1/\gamma}$. First, in order to deal with eigenvalues satisfying $|E_j|\leq A$ we choose $a=A$ in \eqref{eq:main3proofkey} and bound $|E_j|+a\leq 2A$. This yields
\begin{align*}
\sum_{|E_j|^\gamma \leq C_{\gamma,d}^\gamma \int |V|^{\gamma+d/2}\,dx} \delta(E_j)^{\gamma+d/2} & \leq 2^{2\gamma+d} L_{\gamma,d} \, A^{d/2} \int_{\R^d} |V|^{\gamma+d/2}\,dx \\
& = 2^{2\gamma+d} L_{\gamma,d} C_{\gamma,d}^{d/2} \left(\int_{\R^d} |V|^{\gamma+d/2}\,dx \right)^{(\gamma+d/2)/\gamma} \,,
\end{align*}
which is the first inequality in the theorem.

Next, in order to deal with eigenvalues satisfying $|E_j|\geq \mu^{1/\gamma} A$ for some $\mu\geq 1$ we multiply \eqref{eq:main3proofkey} by $a^{2\gamma+d/2-1-\epsilon'}$ and integrate from $\mu^{1/\gamma} A$ to $\infty$. The integral on the right side is finite and equals
$$
L_{\gamma,d} (\epsilon')^{-1} \mu^{-\epsilon'/\gamma} A^{-\epsilon'} \int_{\R^d} |V|^{\gamma+d/2}\,dx \,.
$$
Therefore the integral on the left side is finite as well. We observe that
\begin{align*}
\int_{\mu^{1/\gamma} A}^\infty \frac{a^{2\gamma+d/2-1-\epsilon'}}{(|E|+a)^{2\gamma+d}} \,da = 
|E|^{-d/2-\epsilon'} \int_{\mu^{1/\gamma} A/|E|}^\infty \frac{r^{2\gamma+d/2-1-\epsilon'}}{(1+r)^{2\gamma+d}} \,dr \,.
\end{align*}
Thus, for eigenvalues satisfying $|E_j|\geq\mu^{1/\gamma} A$ we have
$$
\int_{\mu^{1/\gamma} A}^\infty \frac{a^{2\gamma+d/2-1-\epsilon'}}{(|E_j|+a)^{2\gamma+d}} \,da \geq 
|E_j|^{-d/2-\epsilon'} \int_1^\infty \frac{r^{2\gamma+d/2-1-\epsilon'}}{(1+r)^{2\gamma+d}} \,dr =: |E_j|^{-d/2-\epsilon'} c_{\gamma,d,\epsilon'} \,.
$$
Inserting this into the integrated version of \eqref{eq:main3proofkey} we obtain
\begin{align*}
\sum_{|E_j|^\gamma \geq \mu C_{\gamma,d}^\gamma \int |V|^{\gamma+d/2}\,dx} \delta(E_j)^{\gamma+d/2} |E_j|^{-d/2-\epsilon'} 
& \leq \frac{L_{\gamma,d}}{\epsilon' \, c_{\gamma,d,\epsilon'}} \mu^{-\frac{\epsilon'}{\gamma}} A^{-\epsilon'} \int_{\R^d} |V|^{\gamma+d/2}\,dx \\
& = \frac{L_{\gamma,d}\, C_{\gamma,d}^{-\epsilon'}}{\epsilon' \, c_{\gamma,d,\epsilon'}} \mu^{-\frac{\epsilon'}{\gamma}} \left( \int_{\R^d} |V|^{\gamma+d/2}\,dx \right)^\frac{\gamma-\epsilon'}{\gamma},
\end{align*}
which is the claimed inequality.
\end{proof}


\appendix

\section{Comparison of our results with \cite{DeHaKa}}\label{sec:dhk}

For the convenience of the reader we formulate \cite[Thm.$\!$ 7.2.3]{DeHaKa} in our notation in order to facilitate the comparison with our Theorems \ref{main2} and \ref{main3}. This theorem depends on a number $\omega\leq 0$ with $\sigma(-\Delta+V)\subset\{ E\in\C: \re E\geq \omega\}$. If we look for bounds that only depend on the $L^{\gamma+d/2}$ norm of $V$, we are forced to choose
$$
\omega= - \left( C_{\gamma,d} \int |V|^{\gamma+d/2}\,dx \right)^{1/\gamma}
$$
with some constant $C_{\gamma,d}$. (Indeed, even for real $V$ this lower bound on the spectrum of $-\Delta+V$ can be attained.) With this choice of $\omega$, \cite[Thm.$\!$ 7.2.3]{DeHaKa} can be equivalently stated as follows.

\begin{theorem}[\cite{DeHaKa}]\label{dhk}
Let $\gamma\geq 2-d/2$ if $d\leq 3$ and $\gamma>0$ if $d\geq 4$. Then the eigenvalues $E_j\in\C\setminus[0,\infty)$ of $-\Delta+V$ in $L^2(\R^d)$, repeated according to their algebraic multiplicity, satisfy:
\begin{enumerate}
\item If $\gamma< d/2$, for any $\epsilon>0$,
$$
\left( \sum_{|E_j|^\gamma\leq C_{\gamma,d} \int |V|^{\gamma+d/2}\,dx} \frac{\delta(E_j)^{\gamma+d/2+\epsilon}}{|E_j|^{(\gamma+d/2+\epsilon)/2}} \right)^{\frac{2\gamma}{\gamma+d/2+\epsilon}} \leq L_{\gamma,d,\epsilon} \int_{\R^d} |V|^{\gamma+d/2}\,dx \,.
$$
\item If $\gamma\geq d/2$, for any $\epsilon>0$,
$$
\left( \sum_{|E_j|^\gamma\leq C_{\gamma,d} \int |V|^{\gamma+d/2}\,dx} \frac{\delta(E_j)^{\gamma+d/2+\epsilon}}{|E_j|^{d/2}} \right)^{\frac{\gamma}{\gamma+\epsilon}} \leq L_{\gamma,d,\epsilon} \int_{\R^d} |V|^{\gamma+d/2}\,dx \,.
$$
\item For any $0<\epsilon<\gamma$,
$$
\left( \sum_{|E_j|^\gamma\geq C_{\gamma,d} \int |V|^{\gamma+d/2}\,dx} \frac{\delta(E_j)^{\gamma+d/2+\epsilon}}{|E_j|^{d/2+2\epsilon}} \right)^{\frac{\gamma}{\gamma-\epsilon}} \leq L_{\gamma,d,\epsilon} \int_{\R^d} |V|^{\gamma+d/2}\,dx \,.
$$
\end{enumerate}
\end{theorem}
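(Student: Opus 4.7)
\medskip

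The plan is to use the shift trick together with Hansmann's matrix inequality, which for a compact perturbation $B-A$ of a self-adjoint operator $A$ controls the sum of distances from eigenvalues of $B$ to $\sigma(A)$ in terms of a Schatten norm of $B-A$. The natural compact object here is the resolvent difference $(-\Delta+V+a)^{-1}-(-\Delta+a)^{-1}$ for a suitably large shift $a>0$; its eigenvalues are precisely $(E_j+a)^{-1}$, and $\sigma((-\Delta+a)^{-1})=[0,a^{-1}]$.

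The first key step is a pointwise geometric bound which I would take from the literature: $\dist((E+a)^{-1},[0,a^{-1}])\geq \delta(E)/(8|E+a|(|E|+a))$. Combined with Hansmann's inequality applied in $\mathfrak S^{\gamma+d/2}$, this yields
\[
\sum_j \frac{\delta(E_j)^{\gamma+d/2}}{(|E_j|+a)^{2\gamma+d}} \;\lesssim\; \bigl\|(-\Delta+V+a)^{-1}-(-\Delta+a)^{-1}\bigr\|_{\gamma+d/2}^{\gamma+d/2}.
\]
The next step is to bound the right side. Writing the resolvent identity as
\[
(-\Delta+V+a)^{-1}-(-\Delta+a)^{-1} = -(-\Delta+a)^{-1}\sqrt{|V|}\bigl(1+\sqrt{V}(-\Delta+a)^{-1}\sqrt{|V|}\bigr)^{-1}\sqrt{V}(-\Delta+a)^{-1},
\]
Lemma \ref{bssobolev} shows that $\|\sqrt{V}(-\Delta+a)^{-1}\sqrt{|V|}\|\leq 1/2$ once $a\geq C_{\gamma,d}\,(\int |V|^{\gamma+d/2})^{1/\gamma}$, so the middle factor has operator norm at most $2$. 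Then Hölder in trace ideals and the Kato--Seiler--Simon inequality applied to $(-\Delta+a)^{-1}\sqrt{|V|}\in\mathfrak S^{2(\gamma+d/2)}$ give
\[
\bigl\|(-\Delta+V+a)^{-1}-(-\Delta+a)^{-1}\bigr\|_{\gamma+d/2}^{\gamma+d/2} \;\lesssim\; a^{-2\gamma-d/2}\int_{\R^d}|V|^{\gamma+d/2}\,dx,
\]
since $\int(p^2+a)^{-2(\gamma+d/2)}dp \sim a^{-2\gamma-d/2}$ under the stated range of $\gamma$. Combining these estimates produces the key intermediate bound
\begin{equation*}
\sum_j \frac{\delta(E_j)^{\gamma+d/2}}{(|E_j|+a)^{2\gamma+d}} \;\leq\; L_{\gamma,d}\,a^{-2\gamma-d/2}\int|V|^{\gamma+d/2}\,dx, \qquad a\geq A:=C_{\gamma,d}\Bigl(\int|V|^{\gamma+d/2}\Bigr)^{1/\gamma}.
\end{equation*}

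From this one-parameter family of inequalities, both bounds of the theorem follow by the same trick used in Theorem \ref{af}. For the first (small-$|E_j|$) estimate I would choose $a=A$ and bound $|E_j|+a\leq 2A$ when $|E_j|\leq A$, so the denominator is just a constant times $A^{2\gamma+d}$; rearranging and using the definition of $A$ gives exactly the stated $\ell^{\gamma+d/2}$ bound with exponent $\gamma/(\gamma+d/2)$ on the outside. For the second (large-$|E_j|$) estimate I would multiply the key inequality by $a^{2\gamma+d/2-1-\epsilon'}$ and integrate $a$ from $\mu^{1/\gamma}A$ to $\infty$: the right-hand integral is a Beta-type constant times $\mu^{-\epsilon'/\gamma}A^{-\epsilon'}\int|V|^{\gamma+d/2}$, while for $|E_j|\geq\mu^{1/\gamma}A$ the left-hand integral is bounded below by $|E_j|^{-d/2-\epsilon'}$ times a positive constant (the lower limit becomes $\leq 1$ after the substitution $r=a/|E_j|$).

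The main obstacle is the intermediate Schatten-norm bound: one must couple Hansmann's theorem with a uniform-in-$a$ control of the middle inverse factor via a relative smallness argument from Lemma \ref{bssobolev}. Once that is in place, the rest is routine: the geometric inequality for $\dist((E+a)^{-1},[0,a^{-1}])$, the Kato--Seiler--Simon bound for $\|(-\Delta+a)^{-1}\sqrt{|V|}\|_{2(\gamma+d/2)}$, and the choice and integration of the parameter $a$ are all mechanical and mirror the strategy that \cite{DeHaKa0} used in related situations.
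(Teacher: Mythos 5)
The proposal does not prove the stated theorem. What you have written out is, essentially verbatim, the paper's proof of \emph{Theorem \ref{main3}}: the Hansmann-plus-shift argument, the key intermediate bound $\sum_j \delta(E_j)^{\gamma+d/2}(|E_j|+a)^{-2\gamma-d}\lesssim a^{-2\gamma-d/2}\int|V|^{\gamma+d/2}$ for $a\geq A$, and the two specializations (set $a=A$, or multiply by a power of $a$ and integrate). That argument yields the inequalities \eqref{eq:main3} and \eqref{eq:main31}, whose exponent structure is $\delta(E_j)^{\gamma+d/2}$ in the small-ball sum and $\delta(E_j)^{\gamma+d/2}/|E_j|^{d/2+\epsilon'}$ in the large-ball sum. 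Those are \emph{not} the inequalities asserted in Theorem \ref{dhk}. Parts (1) and (2) of Theorem \ref{dhk} carry nontrivial powers of $|E_j|$ in the denominators ($|E_j|^{(\gamma+d/2+\epsilon)/2}$ resp. $|E_j|^{d/2}$) that your derivation cannot produce by choosing $a=A$ in the key intermediate bound; and since $\delta(E_j)=|E_j|$ for eigenvalues with $\re E_j<0$, the ratio $\delta(E_j)^{\epsilon}/|E_j|^{d/2}$ is not bounded as $|E_j|\to 0$, so the implication does not go through by an elementary monotonicity argument either. Indeed the paper explicitly observes that for eigenvalues accumulating at $0$ neither Theorem \ref{main3} nor Theorem \ref{dhk}(1)--(2) dominates the other.

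More fundamentally, Theorem \ref{dhk} is not something the paper proves. It is a reformulation of \cite[Thm.~7.2.3]{DeHaKa}, included in Appendix \ref{sec:dhk} purely for comparison. The only original content there is the verification that, once one fixes $\omega=-\bigl(C_{\gamma,d}\int|V|^{\gamma+d/2}\,dx\bigr)^{1/\gamma}$, the stated bounds are equivalent to \cite[(7.2.7)--(7.2.8)]{DeHaKa}: the argument replaces $|E_j|+\omega$ by $\omega$ on $\{|E_j|\leq\omega\}$ and by $|E_j|$ on $\{|E_j|\geq\omega\}$, and shows monotonicity in $\epsilon$ to relax the constraints $\epsilon<1$ and $\epsilon<d/2-\gamma$ of the original theorem. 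If the task is to prove Theorem \ref{dhk} from the cited reference, that is the argument one must supply; if the task is to prove the present paper's own new bounds, then you have proved Theorem \ref{main3}, not Theorem \ref{dhk}, and the hypothesis $\gamma\geq 2-d/2$ (for $d\leq 3$) versus $\gamma>0$ (for $d\geq 2$) in Theorem \ref{main3} is also different, which is another indication that these are distinct statements.
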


It is not completely obvious that this theorem is equivalent to \cite[Thm.$\!$ 7.2.3]{DeHaKa} with the above choice of $\omega$ and we will justified this below. Accepting this claim for the moment, let us compare Theorem \ref{dhk} with our main results, Theorems \ref{main2} and \ref{main3}.

Theorem \ref{main3} implies (3). In fact,
$$
\frac{\delta(E_j)^{\gamma+d/2+\epsilon}}{|E_j|^{d/2+2\epsilon}}
= \frac{\delta(E_j)^{\gamma+d/2}}{|E_j|^{d/2+\epsilon}} \left( \frac{\delta(E_j)}{|E_j|} \right)^\epsilon
$$
and the last factor is $\leq 1$.\\
Theorem \ref{main2} implies (1) for $\gamma\leq d/6$. In fact,
$$
\frac{\delta(E_j)^{\gamma+d/2+\epsilon}}{|E_j|^{(\gamma+d/2+\epsilon)/2}}
\leq \delta(E_j)^{(\gamma+d/2+\epsilon)/2} = \left( \frac{\delta(E_j)}{|E_j|} \right)^{(\gamma+d/2+\epsilon)/2} \left(\frac{|E_j|}\omega\right)^{(\gamma+d/2+\epsilon)/2} \omega^{(\gamma+d/2+\epsilon)/2} \,.
$$
If $\gamma\leq d/6$ we can find an $\epsilon'>0$ such that $2\gamma+\epsilon'\leq (\gamma+d/2+\epsilon)/2$. Thus the first factor on the right side can be bounded by the corresponding factor with exponent $2\gamma+\epsilon'$. Moreover, the second factor is $\leq 1$. Therefore we obtain (1) from Theorem \ref{main2}.\\
The bound on the accumulation rate for $E_j\to E$ with $E\in (0,\infty)$ given in Theorem \ref{main2} (resp. Theorem \ref{main3}) is stronger than that in (1) (resp. (2)). This simply follows since the factors of $|E_j|$ in the denominators in (1) and (2) are irrelevant in this case.\\
For eigenvalues with $E_j\to 0$, each of Theorem \ref{main2} and (1) can be stronger than the other and each of Theorem \ref{main3} and (2) can be stronger than the other. Unless $\delta(E_j)/|E_j|$ is very small, however, (1) and (2) tend to give better results. In particular, they almost coincide with the bound from \cite{FrLaLiSe} on the accumulation rate of eigenvalues near zero, but outside of sectors containing $[0,\infty)$. Note, however, that neither Theorem \ref{dhk} nor the bounds from \cite{FrLaLiSe} are applicable for $1/2\leq\gamma<1$ if $d=1$, for $0<\gamma<1$ if $d=2$ and for $0<\gamma<1/2$ if $d=3$, whereas our Theorems \ref{main2} and \ref{main3} provide bounds in these cases.

\medskip

We are now going to justify the claim that Theorem \ref{dhk} is equivalent to Theorem \cite[Thm.$\!$ 7.2.3]{DeHaKa} with the above choice of $\omega$.

\emph{Equivalence of $\mathrm{(3)}$.} In the sums corresponding to $|E_j|\geq\omega$, the term $|E_j|+\omega$ in \cite[Thm.$\!$ 7.2.3]{DeHaKa} is equivalent to $|E_j|$. Therefore, we see that the bounds in \cite[(7.2.7)]{DeHaKa} and \cite[(7.2.8)]{DeHaKa} for eigenvalues $|E_j|\geq\omega$ coincide with each other and with our (3). We finally note that
$$
\frac{\delta(E_j)^{\gamma+d/2+\epsilon}}{|E_j|^{d/2+2\epsilon}}\, \omega^\epsilon
= \frac{\delta(E_j)^{\gamma+d/2}}{|E_j|^{d/2}} \left( \frac{\delta(E_j)}{|E_j|} \right)^\epsilon \left( \frac{\omega}{|E_j|} \right)^\epsilon \,,
$$
where the last two factors on the right side are $\leq 1$. This implies that the bound in (3) becomes stronger the smaller $\epsilon$ is, and so the condition $\epsilon<1$ in \cite[Thm.$\!$ 7.2.3]{DeHaKa} can be replaced by our condition $\epsilon<\gamma$.

\emph{Equivalence of $\mathrm{(2)}$.} Note that if $\gamma\geq d/2$ the bound \cite[(7.2.8)]{DeHaKa} is not applicable, so in this case we concentrate on \cite[(7.2.7)]{DeHaKa}. In the sum corresponding to $|E_j|\leq\omega$, the term $|E_j|+\omega$ is equivalent to $\omega$. In this way we immediately obtain our (2). We finally note that
$$
\frac{\delta(E_j)^{\gamma+d/2+\epsilon}}{|E_j|^{d/2}}\, \omega^{-\epsilon}
= \frac{\delta(E_j)^{\gamma+d/2}}{|E_j|^{d/2}}
\left( \frac{\delta(E_j)}{|E_j|} \right)^\epsilon
\left( \frac{|E_j|}\omega \right)^{\epsilon} \,,
$$
and again the last two factors on the right side are $\leq 1$. This implies that the bound in (2) becomes stronger the smaller $\epsilon$ is, and so the condition $\epsilon<1$ in \cite[Thm.$\!$ 7.2.3]{DeHaKa} can be dropped.

\emph{Equivalence of $\mathrm{(1)}$.} We have to consider both \cite[(7.2.7) and (7.2.8)]{DeHaKa}. As in the previous case we replace $|E_j|+\omega$ by $\omega$ if $|E_j|\leq\omega$. The bound \cite[(7.2.8)]{DeHaKa} then becomes our (1). We write
$$
\frac{\delta(E_j)^{\gamma+d/2+\epsilon}}{|E_j|^{(\gamma+d/2+\epsilon)/2}}\, \omega^{-\epsilon/2}
= \frac{\delta(E_j)^{\gamma+d/2}}{|E_j|^{(\gamma+d/2)/2}}
\left( \frac{\delta(E_j)}{|E_j|} \right)^\epsilon
\left( \frac{|E_j|}\omega \right)^{\epsilon/2} \,,
$$
and again the last two factors on the right side are $\leq 1$. This implies that the bound in (1) becomes stronger the smaller $\epsilon$ is, and so the conditions $\epsilon<1$ and $\epsilon<d/2-\gamma$ in \cite[Thm.$\!$ 7.2.3]{DeHaKa} can dropped. On the other hand, \cite[(7.2.7)]{DeHaKa} gives a bound of the same form as in (2), but with the condition that $\epsilon\geq d/2-\gamma$. As we observed there, we would like to choose $\epsilon$ as small as possible and we arrive at
$$
\left( \sum_{|E_j|^\gamma\leq C_{\gamma,d} \int |V|^{\gamma+d/2}\,dx} \frac{\delta(E_j)^d}{|E_j|^{d/2}} \right)^{\frac{\gamma}{d/2}} \leq L_{\gamma,d} \int_{\R^d} |V|^{\gamma+d/2}\,dx \,.
$$
But this is just the bound in (1) for $\epsilon=d/2-\gamma>0$, so this yields nothing new.

This completes our discussion of Theorem \ref{dhk}.

\medskip

\emph{Remark.} For the sake of completeness we also mention the bound
\begin{equation}
\label{eq:dhkaverage}
\sum_j \frac{\delta(E_j)^{\gamma+d/2+\epsilon}}{|E_j|^{d/2+\epsilon}} \leq L_{\gamma,d,\epsilon} \int_{\R^d} |V|^{\gamma+d/2} \,dx
\qquad\text{if} \ \gamma\geq 1
\ \text{and}\ \epsilon>0
\end{equation}
from \cite{DeHaKa0} which is obtained by averaging a bound from \cite{FrLaLiSe} over a free parameter. For a sequence of eigenvalues $(E_j)$ with $E_j\to E\in (0,\infty)$, the bounds of Theorems \ref{main2} and \ref{main3} are stronger are stronger than \eqref{eq:dhkaverage}.


\section{Relatively form-compact perturbations of non-negative operators}\label{sec:defop}

In this section we provide the details for the definition of the operator $H$ in Section \ref{sec:bs}. Recall that the Birman--Schwinger operator $K(z)$ was defined in \eqref{eq:bsdef}.

\begin{lemma}\label{defop}
Under assumption \eqref{eq:katoass}, the quadratic form $\|H_0^{1/2} u\|^2 + (Gu,G_0u)$ with domain $\dom H_0^{1/2}$ is closed and sectorial and generates an $m$-sectorial operator $H$. Let $z\in\rho(H_0)$. Then the operator $1+K(z)$ is boundedly invertible if and only if $z\in\rho(H)$, and in this case
\begin{equation}
\label{eq:resid}
(H-z)^{-1} = (H_0-z)^{-1} - (H_0-z)^{-1} G^* \left( 1+ K(z)\right)^{-1} G_0 (H_0-z)^{-1} \,.
\end{equation}
\end{lemma}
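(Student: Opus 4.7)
The proof separates into three pieces: (i) constructing $H$ as an $m$-sectorial operator from the form, (ii) verifying \eqref{eq:resid} under the hypothesis that $1+K(z)$ is boundedly invertible, and (iii) establishing the converse via Fredholm theory.

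For (i), my plan is to deduce from \eqref{eq:katoass} the stronger fact that $G$ and $G_0$ are $H_0$-form bounded with relative bound $0$. Let $P_n := \chi_{[0,n]}(H_0)$. Since $P_n \to I$ strongly and $A := G(H_0+1)^{-1/2}$ is compact, a standard argument gives $A(I-P_n) \to 0$ in norm. For $u \in \dom H_0^{1/2}$,
\[
\|Gu\| = \|A(H_0+1)^{1/2}u\| \le \|A(I-P_n)\|\,\|(H_0+1)^{1/2}u\| + \|A\|\,(n+1)^{1/2}\,\|u\|\,,
\]
yielding $\|Gu\|^2 \le \epsilon\|H_0^{1/2}u\|^2 + C_\epsilon\|u\|^2$ for any $\epsilon>0$, and similarly for $G_0$. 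Hence $|(Gu, G_0u)| \le \tfrac12(\|Gu\|^2+\|G_0u\|^2)$ is a relative form perturbation of $\|H_0^{1/2}u\|^2$ with bound $0$, so the form $q[u] := \|H_0^{1/2}u\|^2 + (Gu, G_0u)$ on $\dom H_0^{1/2}$ has real part bounded below and $|\im q[u]|$ controlled by $\re q[u]+\|u\|^2$. Kato's representation theorem for closed sectorial forms then produces the $m$-sectorial operator $H$.

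For (ii), with $A$, $B := G_0(H_0+1)^{-1/2}$ and $U_z := (H_0+1)^{1/2}(H_0-z)^{-1}$, I would rewrite the building blocks of \eqref{eq:resid} rigorously as $G_0(H_0-z)^{-1} = BU_z$, $(H_0-z)^{-1}G^* := U_zA^*$, and $K(z) = B(H_0+1)(H_0-z)^{-1}A^*$. Functional calculus shows $U_zA^*\eta \in \dom H_0^{1/2}$ for every $\eta\in\mathcal G$ (the multiplier $\sqrt{\lambda(\lambda+1)}/|\lambda-z|$ is bounded for $z\notin[0,\infty)$), so the candidate $u = R(z)v$ lies in $\dom H_0^{1/2}$. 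To show $u = (H-z)^{-1}v$, I would verify the form identity $(H_0^{1/2}\phi, H_0^{1/2}u) + (G\phi, G_0u) - z(\phi,u) = (\phi,v)$ for all $\phi\in\dom H_0^{1/2}$. Writing $w=(H_0-z)^{-1}v\in\dom H_0$ and $\eta=(1+K(z))^{-1}G_0w$, the $w$-part yields $(\phi,v)+(G\phi, G_0w)$, and the key identity is
\[
(H_0^{1/2}\phi, H_0^{1/2}U_zA^*\eta) - z(\phi, U_zA^*\eta) = (G\phi, \eta) \,.
\]
Together with $G_0U_zA^*\eta = K(z)\eta$, the $U_zA^*\eta$-part sums to $(G\phi, (1+K(z))\eta) = (G\phi, G_0w)$, and the spurious cross term cancels.

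The key identity above is the main technical point, since $U_zA^*\eta$ need not lie in $\dom H_0$. I would establish it by approximating $A^*\eta$ by $P_nA^*\eta \in \ran P_n$: then $U_zP_nA^*\eta$ does lie in $\dom H_0$, and $(H_0-z)U_zP_nA^*\eta = (H_0+1)^{1/2}P_nA^*\eta$ by functional calculus, so passing to $n\to\infty$ in $(\phi,(H_0+1)^{1/2}P_nA^*\eta) = ((H_0+1)^{1/2}\phi, P_nA^*\eta) \to (G\phi,\eta)$ delivers the identity. For (iii), compactness of $K(z)$ makes $1+K(z)$ Fredholm of index $0$, so invertibility reduces to injectivity: any $\eta\in\ker(1+K(z))$ produces $u := -U_zA^*\eta \in \dom H_0^{1/2}$ solving $(H-z)u=0$ in the form sense by the same cancellation, and $u=0$ would force $A^*\eta=0$ by the injectivity of $U_z$, whence $K(z)\eta=0$ and $\eta=0$; thus $\ker(H-z)\ne\{0\}$, contradicting $z\in\rho(H)$. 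The recurring device is to factor every product involving $G$ or $G^*$ through $(H_0+1)^{\pm 1/2}$ so that only the bounded operators $A$, $B$, $U_z$ appear explicitly.
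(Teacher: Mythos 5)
Your proposal follows the same overall strategy as the paper's proof---form bound from compactness, verification of the resolvent formula via the form, and a converse via the kernel of $1+K(z)$---but with two implementation choices worth comparing. For part (i), the paper decomposes $G(H_0+1)^{-1/2}=F+R$ with $F$ finite rank, $\|R\|\le\epsilon$, and then modifies $F$ so that $F(H_0+1)^{1/2}$ is bounded; you instead use the spectral projections $P_n$ of $H_0$ and the fact that $A(I-P_n)\to 0$ in norm for compact $A$. These are equivalent, and yours is perhaps cleaner. For part (ii), your ``key identity'' $(H_0^{1/2}\phi,H_0^{1/2}U_zA^*\eta)-z(\phi,U_zA^*\eta)=(G\phi,\eta)$ is, after adding $(G\phi,G_0U_zA^*\eta)=(G\phi,K(z)\eta)$, exactly the second auxiliary identity the paper invokes; your $P_n$-approximation argument supplies the regularity justification (why the identity holds when $U_zA^*\eta\notin\dom H_0$) which the paper leaves implicit. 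For the converse direction $z\in\rho(H)\Rightarrow 1+K(z)$ invertible, your route is genuinely different: the paper first records the resolvent identities on $\rho(H_0)\cap\rho(H)$ and manipulates them to produce the explicit two-sided inverse $1-G_0(H-z)^{-1}G^*$ of $1+K(z)$, whereas you argue via Fredholm theory ($1+K(z)$ has index $0$ since $K(z)$ is compact) plus the map $\eta\mapsto -U_zA^*\eta$ from $\ker(1+K(z))$ into $\ker(H-z)$. Your argument avoids having to establish the resolvent identities separately, at the cost of not producing the explicit inverse, which is occasionally useful. One point that both your write-up and the paper's proof leave implicit: the form identity in (ii) gives $(H-z)R(z)=I$, i.e.\ surjectivity of $H-z$ with bounded right inverse. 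To conclude $z\in\rho(H)$ one must also check $\ker(H-z)=\{0\}$. This follows from the reverse kernel map $\psi\mapsto G_0\psi$ (recorded later in the paper, in the Birman--Schwinger proposition) or from a duality argument applied to the adjoint form; your write-up would be strengthened by making this step explicit rather than inheriting the gap.
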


\begin{proof}
By assumption \eqref{eq:katoass} we can write, for any $\epsilon>0$,
$$
G (H_0+1)^{-1/2} = F + R \,,
$$
with $F$ finite rank and $\|R\|\leq \epsilon$. Moreover, since $F$ is finite rank and since $\dom H_0^{1/2}$ is dense, we may assume that $F(H_0+1)^{1/2}$ extends to a bounded operator. (Indeed, if $F=\sum |f_j\rangle\langle g_j|$ we can modify the $g_j$ such that $g_j\in\dom H_0^{1/2}$ and absorb the error due to this modification into $R$.) This proves that for $u\in\dom H_0^{1/2}$,
$$
\|G u\| \leq \|R(H_0+1)^{1/2} u\| + \|F(H_0+1)^{1/2} u\| \leq \epsilon \|(H_0+1)^{1/2} u\| + \|F(H_0+1)^{1/2}\| \|u\| \,.
$$
This and a similar bound for $G_0$ implies that for any $\epsilon>0$ there is a $C_\epsilon$ such that for all $u\in\dom H_0^{1/2}$,
$$
\left| (G u,G_0 u) \right| \leq \epsilon \|H_0^{1/2} u\|^2 + C_\epsilon \|u\|^2 \,.
$$
From \cite[Thm.$\!$ VI.3.4]{Ka} we conclude that the quadratic form $h[u]=\|H_0^{1/2} u\|^2 + (Gu,G_0u)$ with domain $\dom H_0^{1/2}$ is closed and sectorial and generates an $m$-sectorial operator $H$. Moreover, the usual resolvent identities
\begin{equation}
\label{eq:residstandard}
(H-z)^{-1} - (H_0-z)^{-1} = - (H-z)^{-1} G^* G_0 (H_0-z)^{-1}
\end{equation}
and
\begin{equation}
\label{eq:residstandard2}
(H-z)^{-1} - (H_0-z)^{-1} = - (H_0-z)^{-1} G^* G_0 (H_0-z)^{-1}
\end{equation}
are valid for $z\in\rho(H_0)\cap\rho(H)$. (We note that, since the quadratic form $h$ has domain $\dom H_0^{1/2}$, the operator $G(H+M)^{-1/2}$ is bounded for sufficiently large $M$ and therefore $(H-z)^{-1} G^*$ can be defined as $(H-z)^{-1} (H+M)^{1/2} (G(H+M)^{-1/2})^*$.)

It follows from \eqref{eq:residstandard} and \eqref{eq:residstandard2} that
$$
\left( 1- G_0 (H-z)^{-1} G^* \right)\left( 1 + K(z) \right) = 1
$$
and
$$
\left( 1 + K(z) \right) \left( 1- G_0 (H-z)^{-1} G^* \right) = 1 \,,
$$
which means that for $z\in\rho(H_0)\cap\rho(H)$ the operator $1+K(z)$ is invertible. Since $K(z)$ is compact, the inverse is bounded.

Now assume conversely that $z\in\rho(H_0)$ and that $1+K(z)$ is boundedly invertible. We note that for $v\in\dom H_0^{1/2}$, $f\in\mathcal H$ and $F\in\mathcal G$ one has
$$
h[v,(H_0-z)^{-1}f] - z(f,(H_0-z)^{-1} f) = (v,f) + (G v, G_0 (H_0-z)^{-1}f)
$$
and
$$
h[v,(H_0-z)^{-1} G^*F] - z(v,(H_0-z)^{-1} G^*F) = (Gv,F) + (Gv,K(z)F) \,.
$$
Using these identities one easily verifies that for $v\in\dom H_0^{1/2}$ and $f\in\mathcal H$ one has
\begin{align*}
& h[v,(H_0-z)^{-1}f - (H_0-z)^{-1} G^* \left( 1+ K(z)\right)^{-1} G_0 (H_0-z)^{-1}f] \\
& \qquad - z(v, (H_0-z)^{-1}f - (H_0-z)^{-1} G^* \left( 1+ K(z)\right)^{-1} G_0 (H_0-z)^{-1}f ) = (v,f) \,.
\end{align*}
This proves that $z\in\rho(H)$ and that $(H-z)^{-1}$ is given by \eqref{eq:resid}.
\end{proof}

For a closed operator $T$ in a Hilbert space with define the set
\begin{align*}
\rho_\ess (T):= & \{ z\in\C:\ \ran(T-z) \ \text{is closed and at least one of} \\
& \qquad\qquad \dim\ker(T-z)\ \text{and}\ \codim\ran (T-z)\ \text{is finite} \}
\end{align*}
and, following Kato, call
$$
\sigma_\ess(T) := \C\setminus\rho_\ess(T)
$$
the \emph{essential spectrum} of $T$. (There are several different notions of essential spectrum \cite{EdEv}, but we will see below that they all coincide in our situation.) The definition of $\rho_\ess(T)$ should be compared with that of the resolvent set
$$
\rho (T):= \{ z\in\C:\ \ran(T-z) \ \text{is closed and}\ \dim\ker(T-z) = \codim\ran (T-z) = 0 \} \,.
$$
According to \cite[Thm. IV.5.17]{Ka}, $\rho_\ess(T)$ is open and $\sigma_\ess(T)$ is closed.

We now return to our situation where $T$ is a perturbation of a self-adjoint, non-negative operator and the perturbation satisfies a relative form-compactness condition.

\begin{proposition}\label{spec}
Under assumption \eqref{eq:katoass} one has
\begin{equation}
\label{eq:weyl}
\sigma_\ess(H)=\sigma_\ess(H_0)
\end{equation}
and
\begin{align}
\label{eq:discev}
& \sigma(H)\setminus\sigma_\ess(H) \notag \\
& = \{ z\in\C:\ \ran(T-z) \ \text{is closed and}\ 0< \dim\ker(T-z) = \codim\ran (T-z) <\infty \} \,.
\end{align}
The set \eqref{eq:discev} is at most countable and consists of eigenvalues of finite algebraic multiplicities which are isolated in $\sigma(H)$.
\end{proposition}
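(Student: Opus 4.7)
The plan is to follow the classical scheme for relatively form-compact perturbations of a self-adjoint operator. First I would show that for any $z_0\in\rho(H)\cap\rho(H_0)$ the resolvent difference $(H-z_0)^{-1}-(H_0-z_0)^{-1}$ is compact. By the identity \eqref{eq:resid} of Lemma \ref{defop} this reduces to observing that both $G_0(H_0-z_0)^{-1}$ and $(H_0-z_0)^{-1}G^*$ are compact, each factoring through one of the compact operators from \eqref{eq:katoass}. That $\rho(H)\cap\rho(H_0)$ is nonempty will be guaranteed by $m$-sectoriality of $H$ (giving a left half-plane in $\rho(H)$) together with non-negativity of $H_0$ (giving $\rho(H_0)\supset\{\re z<0\}$).

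Next, from compactness of the resolvent difference I would invoke Kato's invariance theorem \cite[Thm.$\!$ IV.5.35]{Ka} to conclude $\sigma_\ess(H)=\sigma_\ess(H_0)$, i.e., \eqref{eq:weyl}. Since $\sigma_\ess(H_0)$ is a closed subset of $[0,\infty)\subsetneq\R$, the open set $\rho_\ess(H)=\rho_\ess(H_0)$ is connected. For every $z\in\rho_\ess(H)$ the operator $H-z$ is semi-Fredholm, and by local constancy of the Fredholm index combined with its vanishing on the nonempty open set $\rho(H)\cap\rho_\ess(H)$, the index equals $0$ throughout $\rho_\ess(H)$. Thus $H-z$ is Fredholm of index $0$ on $\rho_\ess(H)$, and when $z$ additionally lies in $\sigma(H)$ both $\dim\ker(H-z)$ and $\codim\ran(H-z)$ are finite, equal, and strictly positive, which is precisely \eqref{eq:discev}.

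To obtain countability, isolation in $\sigma(H)$, and finiteness of the algebraic multiplicity I would rely on Lemma \ref{defop}: for $z\in\rho(H_0)$ one has $z\in\sigma(H)$ iff $1+K(z)$ fails to be boundedly invertible. Since $K(z)$ is a compact-operator-valued analytic family on the connected open set $\rho(H_0)$ and $1+K(z)$ is invertible on a left half-plane (as $\|K(z)\|\to 0$ as $\re z\to-\infty$), the analytic Fredholm alternative implies that this exceptional set is discrete in $\rho(H_0)$; together with the at-most-countable discrete contribution of the isolated eigenvalues of $H_0$ sitting in $\rho_\ess(H)\cap\sigma(H_0)$, this yields the claimed countability and isolation in $\sigma(H)$. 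For the algebraic multiplicity of an eigenvalue $z_j$ I would enclose it by a small contour $\Gamma_j\subset\rho(H_0)\cap\rho_\ess(H)$ and note that the Riesz projection $P_j=-\frac{1}{2\pi i}\oint_{\Gamma_j}(H-\zeta)^{-1}\,d\zeta$ coincides with $-\frac{1}{2\pi i}\oint_{\Gamma_j}[(H-\zeta)^{-1}-(H_0-\zeta)^{-1}]\,d\zeta$, which by the first step is a compact projection and hence of finite rank.

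The hard part will be making precise the invariance of the essential spectrum in the particular sense used in the paper (Kato's semi-Fredholm definition, as opposed to one of the other variants enumerated in \cite{EdEv}); everything else is a standard application of the analytic Fredholm alternative on $\rho(H_0)$ and the Riesz-projection formalism.
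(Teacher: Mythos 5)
Your argument agrees with the paper's for the first three ingredients: compactness of the resolvent difference via \eqref{eq:resid} and \eqref{eq:katoass}, deducing \eqref{eq:weyl} from it, and the index-zero argument on the connected set $\rho_\ess(H)=\rho_\ess(H_0)$. One small point: the paper cites \cite[Prob.~IV.5.38]{Ka}, which is the statement about compact resolvent differences, rather than \cite[Thm.~IV.5.35]{Ka}, which concerns operator-compact perturbations and does not apply here since the perturbation is only form-compact; your preliminary step does establish compactness of the resolvent difference, so the conclusion is fine, but the citation should be adjusted. Your Riesz-projection argument for finiteness of the algebraic multiplicity (the projection is compact, hence finite rank) is a clean, self-contained alternative to the paper's citation of \cite[Thms.~IV.5.28 and~IV.5.10]{Ka}.

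There is a genuine gap in the countability/isolation step. You invoke the analytic Fredholm alternative for $1+K(z)$ on $\rho(H_0)$, which controls the exceptional set only there, not on all of $\rho_\ess(H)$. Under assumption \eqref{eq:katoass} alone, $H_0$ may have isolated eigenvalues of finite multiplicity outside $\sigma_\ess(H_0)$. Such a point $\lambda$ lies in $\rho_\ess(H)$ but not in $\rho(H_0)$, and $K(z)=G_0(H_0-z)^{-1}G^*$ is \emph{not} analytic at $\lambda$ (the resolvent has a pole there), so the Fredholm alternative on $\rho(H_0)$ gives no control near $\lambda$. Your appeal to "the at-most-countable discrete contribution of the isolated eigenvalues of $H_0$" does not close this: even if those points are themselves discrete, nothing in your argument rules out the Birman--Schwinger zeros accumulating at one of them, and if moreover $\lambda\in\sigma(H)$, then \eqref{eq:discev} would contain a non-isolated point. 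The paper avoids this by working directly on $\rho_\ess(H)$ with \cite[Thm.~IV.5.31]{Ka}: near every $z_0\in\rho_\ess(H)$ the nullity and deficiency of $H-z$ are constant on a punctured disk; combined with $\ind(H-z)=0$, connectedness of $\rho_\ess(H)$, and $\rho(H)\neq\emptyset$, this forces the exceptional set to be discrete in all of $\rho_\ess(H)$, not merely in $\rho(H_0)$. You would need either to reproduce that local-constancy argument, or to supplement your Fredholm-alternative argument with a separate treatment of the poles of $K$ at the discrete eigenvalues of $H_0$.
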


\begin{proof}
We follow \cite[Sec.$\!$ IV.5.6]{Ka}. Since $H$ is $m$-sectorial, its resolvent set is non-empty and therefore \eqref{eq:katoass} and \eqref{eq:resid} imply that $(H-z)^{-1}-(H_0-z)^{-1}$ is compact for $z\in\rho(H_0)\cap\rho(H)$. According to \cite[Prob.$\!$ IV.5.38]{Ka}, this implies \eqref{eq:weyl}.

Note that for $z\in\rho_\ess(H)$ the index $\ind(H-z) = \dim\ker(H-z) - \codim \ran(H-z)$ is well-defined (although a-priori it might be equal to $+\infty$ or $-\infty$). Clearly, $\ind(H-z)=0$ for $z\in\rho(H)\subset\rho_\ess(H)$. Since $H_0$ is non-negative, $\rho_\ess(H)=\rho_\ess(H_0)$ is connected and by \cite[Thm.$\!$ IV.5.17]{Ka} the index is constant on connected components of $\rho_\ess(H)$. Therefore we conclude that $\ind(H-z)=0$ for all $z\in\rho_\ess(H)$. This proves \eqref{eq:discev}.

According to \cite[Thm.$\!$ IV.5.31]{Ka} for each $z_0\in\rho_\ess(H)$ there is an $r>0$ such that $\dim\ker(H-z)$ and $\codim\ran(H-z)$ are constant for all $0<|z-z_0|<r$. This fact implies that the set \eqref{eq:discev} is at most countable and has no accumulation points in $\C\setminus\sigma_\ess(H)$. Finally, the fact that points in \eqref{eq:discev} have finite algebraic multiplicities follows from \cite[Thm.$\!$ IV.5.28]{Ka} (taking also into account \cite[Thm.$\!$ IV.5.10]{Ka}). This completes the proof.
\end{proof}

\emph{Remark.}
The fact that $G_0$ and $G$ need not be closable is crucial when our construction is applied to define the Laplacian on a open set $\Omega\subset\R^d$ with Robin boundary conditions. Let $\sigma$ be the (complex-valued) function on $\partial\Omega$ which appears in the Robin boundary condition. Then the corresponding operator can be defined by taking $H_0$ the Neumann Laplacian in $\mathcal H=L^2(\Omega)$ and by taking $G_0$ (resp. $G$) the operator from $L^2(\Omega)$ to $\mathcal G=L^2(\partial\Omega)$ given by the trace operator followed by multiplication by $\sqrt\sigma$ (resp. $\sqrt{|\sigma|}$). These operators are not closable but, by Sobolev trace theorems, \eqref{eq:katoass} is satisfied provided $\partial\Omega$ is sufficiently regular and $\sigma\in L^p$ for a suitable $p$. A similar remark applies to Schr\"odinger operators with potentials supported on hypersurfaces.


\section{Zeroes of regularized determinants}\label{sec:zeroes}

\begin{proof}[Proof of Lemma \ref{zeroes}]
A proof of the fact that $\det_n W$ is analytic can be found, for instance, in \cite{Si1}. 

Clearly, if $\ker W(z_0)\neq\{0\}$, then $\det_n W(z_0)=0$. Now assume conversely that $\det_n W(z_0)=0$ and that $W(z_*)$ is invertible for some $z_*\in\Omega$. Then $\det_n W(z_0)=0$ implies that $\ker W(z_0)\neq\{0\}$ and, since $W(z_0)-1$ is compact, $W(z_0)$ is Fredholm. Moreover, since the analytic function $z\mapsto \det_n W(z)$ does not vanish at $z=z_*$ and therefore is not identically zero, there is an $\epsilon>0$ such that $\det_n W(z)\neq 0$ for all $0<|z-z_0|<\epsilon$. Thus, $\ker W(z)=\{0\}$ for all $0<|z-z_0|<\epsilon$ and, since $W(z)-1$ is compact, this implies that $W(z)$ is invertible for $0<|z-z_0|<\epsilon$. This means that $z_0$ is an eigenvalue of finite type of $W$.

In order to show that the algebraic multiplicity coincides with the order of zero of the determinant we use the factorization \eqref{eq:factorization}. (In the proof of \cite[Thm.$\!$ 21]{LaSu} it is mentioned that Lemma \ref{zeroes} follows from \eqref{eq:factorization}, but, since this is not completely obvious, we provide some details.) Clearly, $P_0 + (z-z_0)^{k_1} P_1 + \ldots + (z-z_0)^{k_r} P_r$ differs from the identity by a finite rank operator. Moreover, from the proof of \cite[Thm.$\!$ XI.8.1]{GGK} we know that $G(z)$ differs from the identity by a finite rank operator and that $E(z)$ differs from $W(z)$ be a finite rank operator. Therefore, under our assumptions, $E(z)$ differs from the identity by an operator in $\mathfrak S^n$. Lemma \ref{detprod} below shows that there is an analytic function $f$ in a neighborhood of $z_0$ such that
$$
\textstyle{\det_n}W(z) = \det\left( P_0 + (z-z_0)^{k_1} P_1 + \ldots + (z-z_0)^{k_r} P_r \right) \, \textstyle{\det_n}\left( E(z) G(z) \right) \, e^{f(z)} \,.
$$
Since $E(z)$ and $G(z)$ are invertible near $z_0$, we conclude that the order of zero of $\det_n W$ at $z_0$ coincides with the corresponding order of the first factor on the right side. But, by an explicit computation,
$$
\det \left( P_0 + (z-z_0)^{k_1} P_1 + \ldots + (z-z_0)^{k_r} P_r \right) = (z-z_0)^{k_1+\ldots+k_r} \,,
$$
which proves the lemma.
\end{proof}

The following result was used in the proof of Lemma \ref{zeroes}. Its proof follows closely that of \cite[Lem.$\!$ 1.5.10]{Ha0}.

\begin{lemma}\label{detprod}
For each $n\in\N$ there is a non-commutative polynomial $p_n(\cdot,\cdot,\cdot)$ such that for all $K,L\in\mathfrak S^n$ and finite rank $F$, $p_n(K,F,L)$ is finite rank and
$$
\textstyle{\det_n}\left( (1+K)(1+F)(1+L) \right) = \det(1+F)\, \textstyle{\det_n}\left( (1+K)(1+L) \right) e^{\tr p_n(K,F,L)} \,.
$$
\end{lemma}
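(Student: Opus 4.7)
The plan is to iterate a standard two-factor multiplicativity identity for regularized determinants. Recall from \cite[Thm.~9.2]{Si} that for $A, B \in \mathfrak{S}^n$ one has
$$\textstyle{\det_n}((1+A)(1+B)) = \textstyle{\det_n}(1+A)\, \textstyle{\det_n}(1+B)\, \exp(\tr Q_n(A, B))$$
for a fixed non-commutative polynomial $Q_n$, each of whose monomials contains at least one factor of $A$ and at least one factor of $B$. Explicitly, $Q_n(A,B) = S_n(A) + S_n(B) - S_n(A+B+AB)$ with $S_n(X) := \sum_{k=1}^{n-1} (-1)^{k+1} X^k/k$; this follows from the representation $\textstyle{\det_n}(1+X) = \det((1+X)\, e^{-S_n(X)})$ together with multiplicativity of the ordinary determinant on the trace class, with the pure-$A$ and pure-$B$ monomials cancelling between the three $S_n$ terms.

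First I apply this identity to the pair $((1+K),(1+F))$, obtaining
$$\textstyle{\det_n}((1+K)(1+F)) = \textstyle{\det_n}(1+K)\, \textstyle{\det_n}(1+F)\, e^{\tr Q_n(K, F)}.$$
Second, since $(1+K)(1+F) = 1 + (K+F+KF)$ with $K+F+KF \in \mathfrak{S}^n$, I apply it again with this element in the first slot and $L$ in the second:
$$\textstyle{\det_n}((1+K)(1+F)(1+L)) = \textstyle{\det_n}((1+K)(1+F))\, \textstyle{\det_n}(1+L)\, e^{\tr Q_n(K+F+KF,\, L)}.$$
Third, the same identity applied to $((1+K),(1+L))$ rearranges to
$$\textstyle{\det_n}(1+K)\, \textstyle{\det_n}(1+L) = \textstyle{\det_n}((1+K)(1+L))\, e^{-\tr Q_n(K, L)}.$$
Combining these three relations and using $\textstyle{\det_n}(1+F) = \det(1+F)\, e^{-\tr S_n(F)}$ (valid since $F$ is trace class) yields the claimed factorization with
$$p_n(K, F, L) := Q_n(K, F) + \bigl[Q_n(K+F+KF,\, L) - Q_n(K, L)\bigr] - S_n(F).$$

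The decisive verification is that $p_n(K, F, L)$ is finite rank, which in particular renders $\tr p_n(K, F, L)$ unambiguous. Every monomial of $Q_n(K, F)$ contains at least one $F$, and every monomial of $S_n(F)$ is a power of $F$; both are therefore finite rank because $F$ is. The bracket $Q_n(K+F+KF, L) - Q_n(K, L)$ is precisely the change in $Q_n(\cdot, L)$ when its first argument is replaced by $K + (F + KF)$, so under non-commutative expansion the contributions in which $K$ is chosen in every slot reproduce $Q_n(K, L)$ and cancel, while every remaining monomial contains at least one factor of $F$ or of $KF$ and is therefore finite rank. Hence $p_n(K, F, L)$ is a finite sum of finite rank operators.

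The main obstacle is making sense of the two-factor identity when $A, B \in \mathfrak{S}^n$ fail to be trace class, because individual monomials of $Q_n(A, B)$ of degree less than $n$ need not be trace class on their own. The cleanest workaround is to establish the identity first for finite rank $A, B$ (where the derivation via multiplicativity of the ordinary Fredholm determinant is literal) and then extend to general $A, B \in \mathfrak{S}^n$ by density together with the continuity of $\textstyle{\det_n}$ in the $\mathfrak{S}^n$-norm (cf. \cite{Si}). For the present lemma this technicality is ultimately immaterial, because the specific combination $p_n(K, F, L)$ constructed above is genuinely finite rank, so both sides of the final identity are manifestly well-defined.
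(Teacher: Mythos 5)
Your argument is essentially the paper's proof: the explicit $p_n$ you obtain, namely $p_n(K,F,L)=-S_n(K+F+L+KF+KL+FL+KFL)+S_n(K+L+KL)$, is exactly the polynomial $\sum_{m=1}^{n-1}\frac{(-1)^m}{m}\pi_m(K,F,L)$ used in the paper, and you verify its finite-rank property by the same cancellation observation (every surviving monomial carries a factor of $F$). The only point where you should be more careful is the final density step. As you yourself note, the two-factor identity $\det_n((1+A)(1+B))=\det_n(1+A)\det_n(1+B)e^{\tr Q_n(A,B)}$ does not extend to general $A,B\in\mathfrak S^n$ by density, because for $n\geq 2$ the quantity $\tr Q_n(A,B)$ is not even defined (low-degree monomials such as $AB$ need not be trace class), so that identity has no meaning to which one could pass in the limit. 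The extension must therefore be carried out directly at the level of the three-factor identity, and there the key point is not merely that both sides are well-defined but that they are continuous along a suitable finite-rank approximation: choosing finite-rank projections $P_j\to 1$ strongly and setting $K_j=P_jKP_j$, $L_j=P_jLP_j$, one has $(1+K_j)(1+F)(1+L_j)-1\to (1+K)(1+F)(1+L)-1$ and $(1+K_j)(1+L_j)-1\to(1+K)(1+L)-1$ in $\mathfrak S^n$ (giving convergence of the two $\det_n$ factors), while $p_n(K_j,F,L_j)\to p_n(K,F,L)$ in $\mathfrak S^1$ because every monomial contains the fixed trace-class factor $F$ and strong convergence of the remaining factors propagates to $\mathfrak S^1$-convergence of the products; hence $\tr p_n(K_j,F,L_j)\to\tr p_n(K,F,L)$. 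Dismissing this as "ultimately immaterial" is the one place where your write-up is looser than it should be — the density argument is precisely how the result passes from finite-rank $K,L$ to $\mathfrak S^n$.
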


\begin{proof}
The formula is clearly true for $n=1$ with $P_1\equiv 0$, so we may assume that $n\geq 2$. Let us first prove the lemma in the case where $K$ and $L$ are finite rank. In this case, we have
\begin{align*}
& \textstyle{\det_n}\left( (1+K)(1+F)(1+L) \right)
= \det\left( (1+K)(1+F)(1+L) \right) \\
& \qquad\qquad \times \exp\left( \sum_{m=1}^{n-1} \frac{(-1)^m}{m} \tr \left(K+F+L+KF+KL+FL+KFL\right)^m \right) \\
& \qquad = \det(1+F)\ \det\left( (1+K)(1+L) \right) \\
& \qquad\qquad \times \exp\left( \sum_{m=1}^{n-1} \frac{(-1)^m}{m} \tr \left(K+F+L+KF+KL+FL+KFL\right)^m \right) \\
& \qquad = \det(1+F)\ \textstyle{\det_n} \left( (1+K)(1+L) \right) \exp\left( \sum_{m=1}^{n-1} \frac{(-1)^m}{m} \tr \pi_{m}(K,F,L) \right)
\end{align*}
with
$$
\pi_{m}(K,F,L) := \left(K+F+L+KF+KL+FL+KFL\right)^m - \left(K+L+KL\right)^m \,.
$$
This proves the claimed formula with
$$
p_n(K,L,M) := \sum_{m=1}^{n-1} \frac{(-1)^m}{m} \pi_{m}(K,F,L) \,.
$$
Now we prove the formula in the general case $K,L\in\mathfrak S^n$. We first observe that $\pi_{m}(K,F,L) = ( M+ \text{finite rank})^m - M^m$ is finite rank (with $M=K+L+KL$), so $p_n(K,F,L)$ is finite rank as well. Let $(P_j)$ be a sequence of finite rank projection which converges strongly to the identity and put $K_j=P_jKP_j$ and $L_j=P_j L P_j$. Then
$$
(1+K_j)(1+F)(1+L_j)-1 \to (1+K)(1+F)(1+L) -1
\qquad\text{in}\ \mathfrak S^n
$$
and therefore
$$
\textstyle{\det_n}\left( (1+K_j)(1+F)(1+L_j) \right) \to \textstyle{\det_n}\left( (1+K)(1+F)(1+L) \right) \,.
$$
Similarly,
$$
\textstyle{\det_n}\left( (1+K_j)(1+L_j) \right) \to \textstyle{\det_n}\left( (1+K)(1+L) \right) \,.
$$
Moreover, $K_j\to K$, $L_j\to L$ strongly and $F\in\mathfrak S^1$, so
$$
p_n(K_j,F,L_j) \to p_n(K,F,L) \qquad\text{in}\ \mathfrak S^1
$$
and therefore
$$
e^{\tr p_n(K_j,F,L_j)} \to e^{\tr p_n(K,F,L)} \,.
$$
From this we conclude that the identity with $K$ and $L$ replaced by $K_j$ and $L_j$ implies the corresponding identity for $K$ and $L$. This completes the proof.
\end{proof}



\bibliographystyle{amsalpha}

\begin{thebibliography}{20}

\bibitem{AAD} A. A. Abramov, A. Aslanyan, E. B. Davies, \textit{Bounds on complex eigenvalues and resonances}. J. Phys. A \textbf{34} (2001), 57--72.

\bibitem{BGK} A. Borichev, L. Golinskii, S. Kupin, \textit{A Blaschke-type condition and its application to complex Jacobi matrices}. Bull. London Math. Soc. \textbf{41} (2009), 117--123.

\bibitem{DN} E. B. Davies, J. Nath, \textit{Schr\"odinger operators with slowly decaying potentials}. J. Comput. Appl. Math. \textbf{148} (2002), 1--28.

\bibitem{DeKa} M. Demuth, G. Katriel, \textit{Eigenvalue inequalities in terms of Schatten norm bounds on differences of semigroups, and application to Schr\"odinger operators}. Ann. Henri Poincar\'e \textbf{9} (2008), no. 4, 817--834.

\bibitem{DeHaKa0} M. Demuth, M. Hansmann, G. Katriel, \textit{On the discrete spectrum of non-selfadjoint operators}. J. Funct. Anal. \textbf{257} (2009), no. 9, 2742--2759.

\bibitem{DeHaKa} M. Demuth, M. Hansmann, G. Katriel, \textit{Eigenvalues of non-selfadjoint operators: A comparison of two approaches}, in: Mathematical Physics, Spectral Theory and Stochastic Analysis, Springer, 2013, 107--163.

\bibitem{EdEv} D. E. Edmunds, W. D. Evans, \textit{Spectral theory and differential operators}. Clarendon Press, Oxford, 1987.

\bibitem{En} A. Enblom, \textit{Estimates for eigenvalues of Schr\"odinger operators with complex-valued potentials}. Preprint (2015), arXiv:1503.06337.

\bibitem{Fr} R. L. Frank, \textit{Eigenvalue bounds for Schr\"odinger operators with complex potentials}. Bull. Lond. Math. Soc. \textbf{43} (2011), no. 4, 745--750.

\bibitem{FrLaLiSe} R. L. Frank, A. Laptev, E. H. Lieb, R. Seiringer, \textit{Lieb--Thirring inequalities for Schr\"odinger operators with complex-valued potentials}. Lett. Math. Phys. \textbf{77} (2006), 309--316.

\bibitem{FrPu} R. L. Frank, A. Pushnitski, \textit{Trace class conditions for functions of Schr\"odinger operators}. Comm. Math. Phys. \textbf{335} (2015), no. 1, 477--496. 

\bibitem{FrSa} R. L. Frank, J. Sabin, \textit{Restriction theorems for orthonormal functions, Strichartz inequalities and uniform Sobolev estimates}. Preprint (2014), http://arxiv.org/pdf/1404.2817.pdf

\bibitem{FrSi} R. L. Frank, B. Simon, \textit{Eigenvalue bounds for Schr\"odinger operators with complex potentials. III}. J. Spectr. Theory, to appear.

\bibitem{GeHoNi} F. Gesztesy, H. Holden, R. Nichols, \textit{On factorizations of analytic operator-valued functions and eigenvalue multiplicity questions}. Integr. Equ. Oper. Theory \textbf{82} (2015), 61--94.

\bibitem{GLMZ} F. Gesztesy, Y. Latushkin, M. Mitrea, M. Zinchenko, \textit{Nonselfadjoint operators, infinite determinants, and some applications}. Russ. J. Math. Phys. \textbf{12} (2005), no. 4, 443--471.

\bibitem{GGK} I. C. Gohberg, S. Goldberg, M. A. Kaashoek, \textit{Classes of linear operators. Vol. 1}. Birkh\"auser Verlag, Basel, 2000.

\bibitem{Ha0} M. Hansmann, \textit{On the discrete spectrum of linear operators in Hilbert spaces}. PhD thesis, TU Clausthal, 2010, http://d-nb.info/1001898664/34/

\bibitem{Ha} M. Hansmann, \textit{An eigenvalue estimate and its application to non-self-adjoint Jacobi and Schr\"odinger operators}. Lett. Math. Phys. \textbf{98} (2011), no. 1, 79--95.

\bibitem{IoJe} A. D. Ionescu, D. Jerison, \textit{On the absence of positive eigenvalues of Schr\"odinger operators with rough potentials}. Geom. Funct. Anal. \textbf{13} (2003), 1029--1081.

\bibitem{IoSc} A. D. Ionescu, W. Schlag, \textit{Agmon--Kato--Kuroda theorems for a large class of perturbations}. Duke Math. J. \textbf{131} (2006), 397--440.

\bibitem{JeKe} D. Jerison, C. E. Kenig, \textit{Unique continuation and absence of positive eigenvalues for Schr\"odinger operators}. Ann. of Math. \textbf{121} (2985), no. 3, 463--488.

\bibitem{Ka} T. Kato, \textit{Perturbation theory for linear operators}. Springer Verlag, Berlin, 1995.

\bibitem{KRS} C. E. Kenig, A. Ruiz, C. D. Sogge, \textit{Uniform Sobolev inequalities and unique continuation for second order constant coefficient differential operators}. Duke Math. J. \textbf{55} (1987), 329--347.

\bibitem{KoTa} H. Koch, D. Tataru, \textit{Carleman estimates and absence of embedded eigenvalues}. Comm. Math. Phys. \textbf{267} (2006), no. 2, 419--449.

\bibitem{LaSa} A. Laptev, O. Safronov, \textit{Eigenvalue estimates for Schr\"odinger operators with complex potentials}. Comm. Math. Phys. \textbf{292} (2009), 29--54.

\bibitem{LaSu} Y. Latushkin, A. Sukhtayev, \textit{The algebraic multiplicity of eigenvalues and the Evans function revisited}. Math. Model. Nat. Phenom. \textbf{5} (2010), no. 4, 269--292.

\bibitem{LiLo} E. H. Lieb, M. Loss, \textit{Analysis. Second edition}. Graduate Studies in Mathematics \textbf{14}. Amer. Math. Soc., Providence, RI, 2001.

\bibitem{LT} E. H. Lieb, W. Thirring, \textit{Inequalities for the moments of the eigenvalues of the Schr\"odinger Hamiltonian and their relation to Sobolev inequalities}. Studies in Mathematical Physics. Princeton University Press, Princeton (1976), pp. 269--303.

\bibitem{Pa} B. S. Pavlov, \textit{On a non-selfadjoint Schr\"odinger operator. II}. (Russian) 1967 Problems of Mathematical Physics, No. 2, Spectral Theory, Diffraction Problems (Russian) pp. 133--157 Izdat. Leningrad. Univ., Leningrad.

\bibitem{Po} Ch. Pommerenke, \textit{Boundary behaviour of conformal maps}. Springer Verlag, Berlin, 1992.

\bibitem{Si1} B. Simon, \textit{Notes on infinite determinants of Hilbert space operators}. Advances in Math. \textbf{24} (1977), no. 3, 244--273.

\bibitem{Si} B. Simon, \textit{Trace ideals and their applications}. Second edition. Amer. Math. Soc., Providence, RI, 2005.

\bibitem{Wa} X. P. Wang, \textit{Number of eigenvalues for a class of non-selfadjoint Schr\"odinger operators}. Preprint (2009), arXiv:0902.0921.

\bibitem{Ya} D. R. Yafaev, \textit{Mathematical scattering theory. Analytic theory}. Mathematical Surveys and Monographs \textbf{158}. Amer. Math. Soc., Providence, RI, 2010.

\end{thebibliography}

\end{document}